\begin{document}

\newtheorem{theorem}{Theorem}[section]
\newtheorem{definition}{Definition}[section]
\newtheorem{corollary}[theorem]{Corollary}
\newtheorem{lemma}[theorem]{Lemma}
\newtheorem{proposition}[theorem]{Proposition}
\newtheorem{step}[theorem]{Step}
\newtheorem{example}[theorem]{Example}
\newtheorem{remark}[theorem]{Remark}

\font\sixbb=msbm6
\font\eightbb=msbm8
\font\twelvebb=msbm10 scaled 1095
\newfam\bbfam
\textfont\bbfam=\twelvebb \scriptfont\bbfam=\eightbb
                           \scriptscriptfont\bbfam=\sixbb

\newcommand{\tr}{{\rm tr \,}}
\newcommand{\linspan}{{\rm span\,}}
\newcommand{\rank}{{\rm rank\,}}
\newcommand{\diag}{{\rm Diag\,}}
\newcommand{\Image}{{\rm Im\,}}
\newcommand{\Ker}{{\rm Ker\,}}

\def\bb{\fam\bbfam\twelvebb}
\newcommand{\enp}{\begin{flushright} $\Box$ \end{flushright}}
\def\cD{{\mathcal{D}}}
\def\N{\bb N}

\title{Isometries of Grassmann spaces, II 
\thanks{This research was supported by the London Mathematical Society Grant, Research in Pairs (Scheme 4), Reference No.: 41642}
\thanks{The first author was also supported by the Engineering and Physical Sciences Research Council grant EP/M024784/1 and the Hungarian National Research, Development and Innovation Office -- NKFIH (grant no.~K115383).}
\thanks{The second author was also supported by grants N1-0061, J1-8133, and P1-0288 from ARRS, Slovenia.}}
\author{Gy\" orgy P\' al Geh\' er \footnote{Department of Mathematics and Statistics, University of Reading, Whiteknights, P.O.~Box 220, Reading RG6 6AX, United Kingdom, G.P.Geher@reading.ac.uk or gehergyuri@gmail.com}, \quad
Peter \v Semrl\footnote{Faculty of Mathematics and Physics, University of Ljubljana,
        Jadranska 19, SI-1000 Ljubljana, Slovenia; Institute of Mathematics, Physics, and Mechanics, Jadranska 19, SI-1000 Ljubljana, Slovenia, peter.semrl@fmf.uni-lj.si}
        }

\date{}
\maketitle

\begin{abstract}
Botelho, Jamison, and Moln\' ar \cite{BJM}, and Geh\' er and \v Semrl \cite{GeS} have recently described the general form of surjective isometries of Grassmann spaces of all projections of a fixed finite rank on a Hilbert space $H$. As a straightforward consequence one can characterize surjective isometries of Grassmann spaces of projections of a fixed finite corank. In this paper we solve the remaining structural problem for surjective isometries on the set $P_\infty (H)$ of all projections of infinite rank and infinite corank when $H$ is separable. The proof technique is entirely different from the previous ones and is based on the study of geodesics in the Grassmannian $P_\infty (H)$. However, the same method gives an alternative proof in the case of finite rank projections.
\end{abstract}
\maketitle

\bigskip
\noindent AMS classification: Primary: 47B49, Secondary: 54E40.

\bigskip
\noindent
Keywords: Isometry, Grassmann space, projection, subspace, gap metric, geodesic structure.


\section{Introduction and statement of the main results}

Let $H$ be a (real or complex) Hilbert space and $n$ a positive integer. We denote by $P_n (H)$ the set of all rank $n$ projections on $H$. In the case when $H$  
is an infinite-dimensional separable Hilbert space, the symbol $P_\infty (H)$ stands for the set of all projections whose images and kernels are both infinite-dimensional. By $\| \cdot \|$
we denote the usual operator norm on $B(H)$, the set of all bounded linear operators on $H$. The distance on the set of all projections induced by the
operator norm is usually called the gap metric.

In \cite{BJM}, 
Botelho,  Jamison, and  Moln\'ar described the general form of surjective isometries of $P_n(H)$ under some dimensionality constraints and under the additional assumption that $H$ is a complex Hilbert space.
Their main tool  was a non-commutative Mazur--Ulam type result on the local algebraic behaviour of surjective isometries between substructures of metric groups.
The authors of the present paper succeeded to extend this result also to the real case and all possible dimensions \cite{GeS}. 
They proved that if  $\dim H > n$ and $\dim H \neq 2n$, then  
for every surjective isometry $\phi\colon P_n (H) \to P_n (H)$ there exists a unitary or an antiunitary operator (an orthogonal operator in the real case) $U$ on $H$ such that 
$\phi (P) = UPU^\ast$,  $P \in P_n (H)$.
In the case when $\dim H = 2n$,  we have either the above form, or 
$\phi (P) = U(I-P)U^\ast$,  $P \in P_n (H)$. 

The main idea was to show that the set of certain geometric
midpoints between two projections $P$ and $Q$ is a compact manifold if and only if $P$ and $Q$ are orthogonal.
Then the structural result for orthogonality preserving maps on $P_n (H)$ was used to complete the proof. This approach
works in all cases but the case when $\dim H = 2n$. In this exceptional case orthogonality preservers may have a wild behaviour and additional tools coming from the geometry of algebraic
homogeneous spaces were needed to complete the proof. 

Using the obvious fact that for any pair of projections $P,Q$ we have
$$
\| P - Q \| = \| (I-P) - (I-Q) \|
$$
we can also solve the problem of describing the general form of surjective isometries on the set of all projections of a fixed finite corank leaving
open only the case of surjective isometries on the set of projections of infinite rank and infinite corank.
 
The aim of this paper is to solve this remaining case, thus completing the research program that started with \cite{BJM}. 

\begin{theorem}\label{glavni}
Let $H$ be an infinite-dimensional complex (real) separable Hilbert space and $\phi\colon P_\infty (H) \to P_\infty (H)$ a surjective map such that
$$
\| \phi (P) - \phi (Q) \| = \| P - Q \|
$$
for every pair $P,Q \in P_\infty (H)$. Then there exists a unitary or an antiunitary operator (orthogonal operator) $U$ on $H$ such that either we have
$$
\phi (P) = UPU^\ast
$$
for every $P \in P_\infty (H)$; or
$$
\phi (P) = U (I-P) U^\ast
$$
is satisfied for every $P \in P_\infty (H)$.
\end{theorem}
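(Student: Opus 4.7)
The plan is to exploit the geodesic structure of $P_\infty(H)$ under the gap metric, paralleling but methodologically replacing the midpoint-based strategy used in \cite{GeS} for the finite-rank case. The broad strategy is to detect the orthogonality relation $PQ=0$ between projections in $P_\infty(H)$ by a property expressible purely in terms of gap distances, deduce that $\phi$ preserves this relation, and then invoke a structural result on orthogonality preservers to produce the desired unitary (or antiunitary, respectively orthogonal) operator $U$, allowing the complementation $P\mapsto I-P$ as a possible post-composition.

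The first and most technical step is to classify the metric geodesics between a pair $P,Q\in P_\infty(H)$. Halmos's two-projections theorem decomposes $H$ as an orthogonal sum of joint reducing subspaces of dimension at most two in which $P,Q$ are simultaneously diagonalized by their principal angles, and from this description one can enumerate all isometric embeddings $\gamma\colon[0,\|P-Q\|]\to P_\infty(H)$ with $\gamma(0)=P$ and $\gamma(\|P-Q\|)=Q$. When $\|P-Q\|<1$ there should be a unique such geodesic; when $\|P-Q\|=1$ the family is rich, and its fine structure depends on the dimensions of the four joint subspaces $\Image P\cap\Image Q$, $\Image P\cap\Ker Q$, $\Ker P\cap\Image Q$, and $\Ker P\cap\Ker Q$. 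The orthogonal case $PQ=0$, together with its dual $(I-P)(I-Q)=0$, should be singled out by a purely metric property of this geodesic-set, for example the existence of a canonical bi-isometric identification between the set of all projections lying on some geodesic from $P$ to $Q$ and a Grassmann space over a subspace of $H$. Any such property is automatically preserved by the surjective isometry $\phi$.

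Once orthogonality is known to be preserved, the proof concludes by a description of surjective orthogonality preservers on $P_\infty(H)$: every such map should have the form $P\mapsto UPU^\ast$ or $P\mapsto U(I-P)U^\ast$ for a unitary, antiunitary, or orthogonal operator $U$, essentially by choosing an orthonormal basis compatible with a maximal family of mutually orthogonal rank-one extensions of a suitable projection and applying a Wigner-type argument. The main obstacle I expect is the middle step: in the infinite-rank and infinite-corank setting the midpoint set loses the compactness that was central to \cite{GeS}, so the characterization of orthogonality must be carried out with a genuinely geodesic-theoretic invariant, and one must verify carefully that the chosen invariant separates orthogonal pairs from all other relative positions of $P,Q$ without appealing to finite-dimensional compactness; a secondary subtlety is the delicate case analysis forced by the many possible values of the dimensions of the four joint subspaces above.
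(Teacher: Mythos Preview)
Your outline has the right architecture---use a geodesic invariant to detect a relation, then invoke a structural theorem---and this is essentially the paper's strategy. But two concrete gaps separate your sketch from a working proof.

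First, no purely metric property can single out $PQ=0$ alone: the map $P\mapsto I-P$ is a surjective isometry of $P_\infty(H)$ that does \emph{not} preserve orthogonality (the paper exhibits explicit $P,Q$ with $PQ=0$ but $(I-P)(I-Q)\neq 0$). What the geodesic structure can detect is only the symmetric relation $P\sim Q$, meaning $P\perp Q$ \emph{or} $(I-P)\perp(I-Q)$. You hint at this (``together with its dual''), but then slip back into ``once orthogonality is known to be preserved''. The precise invariant the paper uses is: for \emph{every} midpoint $R$ with $\|R-P\|=\|R-Q\|=1/\sqrt{2}$, there is a \emph{unique} curve $\gamma\colon[0,\pi/2]\to P_\infty(H)$ through $P,R,Q$ satisfying $\|\gamma(\theta_1)-\gamma(\theta_2)\|=\sin|\theta_1-\theta_2|$; this holds if and only if $P\sim Q$. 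The ``only if'' direction requires explicit constructions of multiple such curves in every remaining Halmos configuration (nontrivial $K$ with $\|S\|=1$; nontrivial $K$ with $\|S\|<1$; trivial $K$ but both $\Image P\cap\Image Q$ and $\Ker P\cap\Ker Q$ nonzero), and your proposal does not indicate how to produce these.

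Second, and more seriously, the passage from ``$\phi$ preserves $\sim$'' to the structural conclusion is not a Wigner-type argument on an orthonormal basis; it is a separate combinatorial piece that occupies roughly a third of the paper's preliminary section. One introduces the sharper relation $P\,\sharp\,Q$ (namely $P\perp Q$ with $P+Q\in P_\infty(H)$), proves a connectivity lemma that any two elements of $P_\infty(H)$ are joined by a short $\sharp$-chain, and then shows by a pigeonhole and propagation argument that a $\sim$-preserving bijection must, after possibly composing with $P\mapsto I-P$, preserve $\sharp$, hence preserve order $\le$, hence send $I-T$ to $I-\phi(T)$, and only then preserve $\perp$. At that point the known theorem on orthogonality preservers on $P_\infty(H)$ applies. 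Your last paragraph skips this entire chain, and the ``maximal family of mutually orthogonal rank-one extensions'' idea does not obviously survive the ambiguity between $\perp$ and its complement-dual built into $\sim$.
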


It should be emphasized here that there is an essential difference between surjective isometries of $P_n (H)$ and $P_\infty (H)$. While each surjective
isometry $\phi\colon P_n (H) \to P_n (H)$ preservers orthogonality, that is, for every pair $P,Q \in P_n (H)$ we have $PQ = 0 \iff \phi (P) \phi (Q) =0$,
this is not true for surjective isometries of $P_\infty (H)$. Indeed, let an infinite-dimensional separable Hilbert space $H$ be represented as the orthogonal direct sum of three
copies of a Hilbert space $K$, $H= K \oplus K \oplus K$, and let $\phi\colon P_\infty (H) \to P_\infty (H)$ be a bijective isometry defined by $\phi (P) = I -P$, $P\in P_\infty (H)$.
Then for $P,Q \in P_\infty (H)$ that have matrix representations
$$
P = \left[ \begin{matrix} {I & 0 & 0 \cr 0 & 0 & 0 \cr 0 & 0 & 0 \cr} \end{matrix} \right] \ \ \ {\rm and} \ \ \
Q = \left[ \begin{matrix} {0 & 0 & 0 \cr 0 & I & 0 \cr 0 & 0 & 0 \cr} \end{matrix} \right]
$$  
with respect to the above direct sum decomposition of $H$, we have $PQ=0$, while
$$
\phi (P) = \left[ \begin{matrix} {0 & 0 & 0 \cr 0 & I & 0 \cr 0 & 0 & I \cr} \end{matrix} \right] \ \ \ {\rm and} \ \ \
\phi (Q) = \left[ \begin{matrix} {I & 0 & 0 \cr 0 & 0 & 0 \cr 0 & 0 & I \cr} \end{matrix} \right]
$$ 
are obviously not orthogonal.

This difference indicates that the structural problem for isometries of $P_\infty (H)$ might be more difficult and that the methods used in the case of isometries on $P_n (H)$
do not work in the present case. However, a careful reader will notice that the proof techniques we will use here to prove the above theorem work also for isometries on $P_n (H)$.

The main motivation for studying this kind of problems comes from mathematical physics.
The Grassmann space $P_1(H)$ is used to represent the set of pure states of the quantum system, and the quantity $\tr (PQ)$ is the so-called transition probability between two pure states. 
The famous Wigner's unitary-antiunitary theorem describes the general form of transformations of $P_1(H)$ which preserve the transition probability. 
One can easily verify that $\|P-Q\| = \sqrt{1 - \tr PQ}$ holds true for every pair $P,Q \in P_1 (H)$. Hence, Wigner's theorem can be interpreted as the structural result for
isometries of $P_1 (H)$. For more detailed explanation with many other references, other motivations, and some  recent related results we refer ro \cite{BJM, Geh, GeS, Mo}.

Let $H$ be an infinite-dimensional separable Hilbert space and $n$ a positive integer. We will use the notation $\mathbb{N}_0 = \{ 0, 1,2, \ldots \}$. By $P^n (H)$ we denote the set of all projections on $H$ whose kernel is $n$-dimensional, and by $P(H)$
the set of all projections on $H$. We further set $P_0 (H) = \{ 0 \}$ and $P^0 (H) = \{ I \}$. Then clearly, $P(H)$ is a disjoint union of subsets of projections
$$
P(H) = \left( \bigcup_{n=0}^\infty \left( P_n (H) \cup P^n (H) \right) \right) \,  \cup P_\infty (H) .
$$
After having a full understanding of the structure of isometries of $P_n (H)$, $n=1,2, \ldots$, and $P_\infty (H)$ one may ask what are all surjective
isometries of $P(H)$. As we shall see this is a rather easy question once we observe that
$$
\| P - Q \| = 1
$$
whenever $P$ and $Q$ belong to two different subsets $P_n (H)$, $P^n (H)$, $n \in \mathbb{N}_0 \cup \{ \infty \}$,
while in the case that $P$ and $Q$ both belong to the same subset 
there exists a finite sequence of projections $P = P_0 , P_1 , \ldots , P_k = Q$ such that
$$
\| P_{j-1} - P_j \| < 1 , \ \ \ j= 1, \ldots , k.
$$

Using this simple observation it will be easy to prove the following.

\begin{theorem}\label{vse}
Let $H$ be an infinite-dimensional separable Hilbert space and $\phi\colon P(H) \to P(H)$ a surjective map such that
$$
\| \phi (P) - \phi (Q) \| = \| P - Q\|
$$
for all pairs $P,Q \in P(H)$. Then the set $\mathbb{N}_0$ can be written as a disjoint union $\mathbb{N}_0 = M \cup N$, and there exists a system of operators $U_0, U_1, U_2, \ldots ; U_\infty;$ $W_0, W_1, W_2, \ldots$ such that each of them is a unitary or an antiunitary operator on $H$ (orthogonal operator in the real case) and that we have
$$
\phi (P) = U_n P U_{n}^\ast \ \ \ {\rm\it and} \ \ \  \phi (Q) = W_n Q W_{n}^\ast
$$
for every $n \in M$, $P \in P_n (H)$ and $Q \in P^n (H)$;
$$
\phi (P) = U_n (I-P) U_{n}^\ast \ \ \ {\rm\it and} \ \ \  \phi (Q) = W_n (I-Q) W_{n}^\ast
$$
for every $n \in N$, $P \in P_n (H)$ and $Q \in P^n (H)$; and we have either
$$
\phi(P) = U_\infty P U_{\infty}^\ast
$$
for every $P \in P_{\infty} (H)$, or 
$$
\phi(P) = U_\infty (I-P) U_{\infty}^\ast
$$
for every $P \in P_{\infty} (H)$.
\end{theorem}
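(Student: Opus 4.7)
The plan is to exploit the observation that the subsets $P_n(H)$, $P^n(H)$ (for $n\in\mathbb{N}_0$), and $P_\infty(H)$ in the stated decomposition of $P(H)$ are precisely the equivalence classes of the relation ``$P$ and $Q$ are joined by a finite chain of projections with consecutive gaps strictly less than $1$.'' For two projections in distinct subsets, the identity $\|P-Q\|=1$ follows from the standard fact that $\|P-Q\|<1$ forces $\dim\Image P=\dim\Image Q$ and $\dim\Ker P=\dim\Ker Q$: under this hypothesis the restriction $Q|_{\Image P}\colon\Image P\to\Image Q$ is bijective via a Neumann-series argument. Conversely, within each subset any two projections are joined by a continuous path in the gap metric (for instance, through a continuous unitary-conjugation path), which by compactness can be covered by a finite chain with consecutive gaps less than $1$. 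Since $\phi$ is a surjective isometry, it preserves this equivalence relation and therefore descends to a bijection $\sigma$ on the collection of subsets.

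Next I would pin down $\sigma$. The only singletons in the decomposition are $P_0(H)=\{0\}$ and $P^0(H)=\{I\}$, so $\sigma$ permutes them, placing $0$ in $M$ or $N$ accordingly. The main technical step is to show that $\sigma$ preserves the pair $\{P_n(H),P^n(H)\}$ for each $n\ge 1$ and fixes $P_\infty(H)$. I would accomplish this by exhibiting metric invariants of the subsets in the spirit of the midpoint-set approach of \cite{BJM, GeS}: for a suitable pair at distance $1$, the set of geometric midpoints is a compact manifold whose dimension depends on $n$ when the pair lies in $P_n(H)$ with $n$ finite (the midpoints are parametrized by an $n$-dependent Grassmannian inside $\Image(P+Q)$), whereas in $P_\infty(H)$ the corresponding midpoint set is non-compact. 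Combined with the canonical isometry $P\mapsto I-P$ from $P_n(H)$ onto $P^n(H)$ (which shows these two subsets are metrically indistinguishable as abstract metric spaces), this pins down $\sigma$ up to the pairwise swap recorded by the set $N$.

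Finally, with $\sigma$ determined, I would invoke the appropriate structure theorem on each subset: the theorem of \cite{BJM, GeS} on every $P_n(H)$ (and on $P^n(H)$, via $Q\mapsto I-Q$), and Theorem~\ref{glavni} on $P_\infty(H)$. For $n\in M$ these yield directly the operators $U_n$ and $W_n$ as in the statement; for $n\in N$, one first composes $\phi$ with $I-\cdot$ on the relevant side to reduce to the $M$ case, absorbing the swap into the resulting unitary and producing the claimed form $\phi(P)=U_n(I-P)U_n^*$ and $\phi(Q)=W_n(I-Q)W_n^*$. Gluing this per-subset data together gives the desired isometry. The principal obstacle is the second step—establishing pair-preservation via metric invariants—which is where the geometric tools from the preceding parts of this research program must be brought to bear; once it is in hand, everything else is routine bookkeeping.
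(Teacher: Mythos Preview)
Your proposal is correct and follows essentially the same route as the paper: partition $P(H)$ into the classes $P_n(H)$, $P^n(H)$, $P_\infty(H)$ via the ``chain with gaps $<1$'' relation, observe that $\phi$ permutes these classes, pin down the permutation using the structure of midpoint sets $P^{\le 1/\sqrt{2}}\cap Q^{\le 1/\sqrt{2}}$ for pairs at distance $1$, and then invoke the structural theorems on each piece. Two small discrepancies are worth flagging: the compact midpoint manifold arising in $P_n(H)$ is homeomorphic to the unitary group $\mathcal{U}_n$ (as shown in \cite{GeS}), not to a Grassmannian; and the paper singles out $P_\infty(H)$ not by non-compactness of midpoint sets (which can also occur in $P_n(H)$) but by the stronger fact, coming from Lemma~\ref{etko}, that in $P_\infty(H)$ one can choose a pair at distance $1$ whose midpoint set is \emph{empty}---a phenomenon that never occurs in $P_n(H)$ or $P^n(H)$.
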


Of course, one can also ask what are all isometries of $P(H)$ in the case when $H$ is finite-dimensional, $\dim H= m$. Note that this time we do not need to assume that the isometry is surjective.
The reason is that each isometry $\phi\colon P(H) \to P(H)$
is an injective continuous map and since
$$
P(H) = \{ 0 \} \cup P_1(H) \cup \ldots \cup P_{m-1} (H) \cup \{ I \}
$$
is a finite disjoint union of Grassmann spaces and each of them is a compact and connected manifold, we see by the domain invariance theorem that $\phi$ must be automatically surjective. Moreover, comparing the dimensions of these manifolds
we see that every isometry $\phi\colon P(H) \to P(H)$ 
\begin{itemize}
\item either maps $0$ to $0$ and $I$ to $I$; or maps $0$ to $I$ and $I$ to $0$,
\item either maps $P_1(H)$ onto $P_1(H)$ and $P_{m-1}(H)$ onto $P_{m-1} (H)$; or maps $P_1(H)$ onto $P_{m-1}(H)$ and $P_{m-1}(H)$ onto $P_1 (H)$,
\item either maps $P_2(H)$ onto $P_2(H)$ and $P_{m-2}(H)$ onto $P_{m-2} (H)$; or maps $P_2(H)$ onto $P_{m-2}(H)$ and $P_{m-2}(H)$ onto $P_2 (H)$,
\end{itemize}
$$
\vdots
$$
We can now easily get the full understanding of the structure of isometries on $P(H)$ using the main theorem from \cite{GeS}.
In particular, one needs to know what is the general form of all isometries from $P_k (H)$ into $P_{m-k}(H)$ when $k < m/2$. All one needs to observe is that
if $\varphi\colon P_k (H) \to P_{m-k}(H)$ is an isometry then the map $\xi\colon P_k (H) \to P_k (H)$ defined by
$$
\xi (P) = I - \varphi (P), \ \ \ P \in P_k (H),
$$
is an isometry, too, and hence, the main theorem from \cite{GeS} implies that 
there exists a unitary or an antiunitary operator (orthogonal operator) $U$ on $H$ such that 
$\varphi (P) = U(I-P)U^\ast$,  $P \in P_k (H)$.

To avoid using too much space for trivialities we leave the exact formulation of the structural results for isometries of $P(H)$, $\dim H = m$, (there is a slight difference between the cases when $m$ is even or $m$ is odd)
to the reader. In the rest of the paper we will often speak of unitary operators, unitary or antiunitary operators, and unitary similarities without mentioning the real case -- it will be self-understood that this will mean orthogonal operators and orthogonal similarities.

\section{Preliminary results}

One of the main tools we will need all the time is the famous Halmos' two projections theorem, see \cite{BS}, which reads as follows.

\begin{theorem}[P.R.~Halmos]\label{twop}
	Let $H$ be a real or complex Hilbert space and
	$P,Q$ be projections on $H$. Then up to a unitary similarity $H$ can be written as an orthogonal direct sum decomposition $H= H_1 \oplus H_2 \oplus H_3 \oplus H_4 \oplus K \oplus K$
	(note that the last two summands are equal)  and the projections $P$ and $Q$ have the corresponding matrix representations
	\begin{equation}\label{twop-form}
	P = \left[ \begin{matrix} { I & 0 & 0 & 0 & 0 & 0 \cr  0 & 0 & 0 & 0 & 0 & 0 \cr  0 & 0 & I & 0 & 0 & 0 \cr  0 & 0 & 0 & 0 & 0 & 0 \cr  0 & 0 & 0 & 0 & I & 0 \cr  0 & 0 & 0 & 0 & 0 & 0 \cr} \end{matrix} \right]
	\ \ \ {\rm\it and} \ \ \ 
	Q = \left[ \begin{matrix} { 0 & 0 & 0 & 0 & 0 & 0 \cr  0 & I & 0 & 0 & 0 & 0 \cr  0 & 0 & I & 0 & 0 & 0 \cr  0 & 0 & 0 & 0 & 0 & 0 \cr  0 & 0 & 0 & 0 & C^2 & SC \cr  0 & 0 & 0 & 0 & SC & S^2 \cr} \end{matrix} \right],
	\end{equation}
	where $S, C\colon K \to K$ are self-adjoint injective operators satisfying $0 \le S,C \le I$ and $S^2 + C^2 = I$.
	Moreover, we have $H_1 = {\rm Im}\, P\cap {\rm Ker}\, Q$, $H_2 = {\rm Ker}\, P\cap {\rm Im}\, Q$, $H_3 = {\rm Im}\, P\cap {\rm Im}\, Q$ and $H_4 = {\rm Ker}\, P\cap {\rm Ker}\, Q$.
\end{theorem}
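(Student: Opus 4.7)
\medskip

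\noindent\textbf{Proof plan.} The strategy is to separate $H$ into a part where $P$ and $Q$ commute elementarily and a ``generic'' part, and then realise a canonical normal form on the generic part. First I would set $H_1 = \Image P \cap \Ker Q$, $H_2 = \Ker P \cap \Image Q$, $H_3 = \Image P \cap \Image Q$, $H_4 = \Ker P \cap \Ker Q$, and observe that each $H_i$ is a closed subspace invariant under both $P$ and $Q$ on which the corresponding blocks of (\ref{twop-form}) are immediate. Let $M = (H_1 \oplus H_2 \oplus H_3 \oplus H_4)^{\perp}$; since $\Image P$, $\Ker P$, $\Image Q$, $\Ker Q$ are all $\{P,Q\}$-invariant, so is $M$, and using $\Image(P|_M) = \Image P \cap M$, etc., one checks that the four analogous intersections taken inside $M$ all vanish. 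In other words, the pair is in \emph{generic position} on $M$.

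On $M$ I would split $M = K_1 \oplus K_2$ with $K_1 = \Image(P|_M)$ and $K_2 = \Ker(P|_M)$, so that $P|_M$ has matrix $\left[\begin{matrix} I & 0 \\ 0 & 0 \end{matrix}\right]$. Writing $Q|_M = \left[\begin{matrix} A & B \\ B^* & D \end{matrix}\right]$, self-adjointness and $Q^2 = Q$ force $A = A^2 + BB^*$, $D = B^*B + D^2$, and the crucial intertwining $AB = B(I-D)$. Generic position translates into $\Ker A = \Ker(I-A) = \{0\}$ on $K_1$ and $\Ker D = \Ker(I-D) = \{0\}$ on $K_2$, which in turn makes $BB^* = A(I-A)$ have dense range in $K_1$ and $B^*B = D(I-D)$ have dense range in $K_2$.

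The core step is then the polar decomposition $B = U|B|$: by the density of the ranges just noted, the partial isometry $U$ extends to a genuine unitary $K_2 \to K_1$. Combining the intertwining $AB = B(I-D)$ with the fact that $|B|$ commutes with $D$ and with $I-D$ by functional calculus, one gets $AU = U(I-D)$ on a dense subspace and hence everywhere, equivalently $UDU^* = I - A$. Identifying $K_2$ with $K_1 =: K$ via $U$, the off-diagonal block becomes $BU^* = U|B|U^* = \sqrt{A(I-A)}$; setting $C = \sqrt{A}$ and $S = \sqrt{I-A}$, the self-adjoint operators $C, S$ are injective (from the vanishing kernels), commute, satisfy $S^2 + C^2 = I$, and the block equals $CS = SC$, producing exactly the matrix form in (\ref{twop-form}).

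The step I expect to be most delicate is the upgrade of the polar-decomposition partial isometry to a bona fide unitary between $K_1$ and $K_2$: this is precisely where the preliminary removal of $H_1, \ldots, H_4$ earns its keep, since only afterwards do $A$, $I-A$, $D$, $I-D$ all have trivial kernels and do the ranges of $B$ and $B^*$ become dense. Everything after the polar decomposition is routine spectral calculus, and the identification $H_i = \Image P \cap \Ker Q$, etc., needed for the ``moreover'' clause is already built into the construction.
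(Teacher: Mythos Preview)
The paper does not supply its own proof of this theorem: Halmos' two projections theorem is quoted as a tool, with a reference to \cite{BS} for the argument, so there is nothing in the paper to compare your outline against.

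That said, your outline is the standard proof and is essentially correct. One small wording issue: the sentence ``since $\Image P$, $\Ker P$, $\Image Q$, $\Ker Q$ are all $\{P,Q\}$-invariant'' is not literally true ($\Image P$ need not be $Q$-invariant); what you actually need, and what does hold, is that each of the \emph{intersections} $H_1,\dots,H_4$ is invariant under both $P$ and $Q$ (indeed, $P$ and $Q$ act as $0$ or $I$ on each $H_i$), and since $P,Q$ are self-adjoint this upgrades to reducing, whence $M$ reduces both. After that your argument goes through: generic position on $M$ forces $A$, $I-A$, $D$, $I-D$ to be injective, so $BB^*=A(I-A)$ and $B^*B=D(I-D)$ are injective, making the polar partial isometry $U$ in $B=U|B|$ a genuine unitary $K_2\to K_1$; the intertwining $AB=B(I-D)$ together with $[|B|,D]=0$ then gives $AU=U(I-D)$ on the dense range of $|B|$ and hence everywhere, and the functional-calculus identification $C=\sqrt{A}$, $S=\sqrt{I-A}$ yields the stated block form.
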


Note that it may happen that in the above representation of $P$ and $Q$ some of the subspaces $H_1, H_2 , H_3 , H_4, K$ are zero subspaces. Observe also that $S$ is the unique positive square root of $I - C^2$.
In particular, $S$ and $C$ commute. 

Next, using Halmos' two projections theorem, we prove an inequality that is probably known, however, we were not able to find it in the literature.

\begin{proposition}\label{ninana-es-jiba}
Let $H$ be a real or complex Hilbert space and $P,Q \in P(H)$.
If ${\rm Ker}\, P\cap {\rm Im}\, Q \neq \{0\}$ or ${\rm Im}\, P\cap {\rm Ker}\, Q \neq \{0\}$, then 
$$
	\| P - Q \| = 1,
$$
otherwise we have
$$
	\| P - Q \| = \sqrt{ \| (I-P)Q (I-P) \| } = \|S\|,
$$
where $S$ is the operator from (\ref{twop-form}) in the previous theorem.
In particular, we always have the following inequality:
$$
	\| P - Q \| \ge \sqrt{ \| (I-P)Q (I-P) \| }.
$$
\end{proposition}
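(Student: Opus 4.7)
My plan is to reduce everything to a direct block computation using Halmos' two projections theorem. Write $H = H_1 \oplus H_2 \oplus H_3 \oplus H_4 \oplus K \oplus K$ and $P, Q$ in the form (\ref{twop-form}). Since the statement and the norms are invariant under unitary similarity, I may assume $P, Q$ are already in this form.

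First I would compute $P - Q$ blockwise. It is block diagonal with blocks $I_{H_1}$, $-I_{H_2}$, $0_{H_3}$, $0_{H_4}$, and the $2 \times 2$ block
$$
M = \begin{bmatrix} S^2 & -SC \\ -SC & -S^2 \end{bmatrix}
$$
on $K \oplus K$. Using $S^2 + C^2 = I$ and $SC = CS$, a direct multiplication gives
$$
M^2 = \begin{bmatrix} S^4 + S^2 C^2 & 0 \\ 0 & S^2 C^2 + S^4 \end{bmatrix} = \begin{bmatrix} S^2 & 0 \\ 0 & S^2 \end{bmatrix}.
$$
Since $M$ is self-adjoint, $\|M\| = \sqrt{\|M^2\|} = \|S\|$. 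Hence
$$
\|P-Q\| = \max\bigl(\mathbf{1}_{H_1 \neq \{0\}},\; \mathbf{1}_{H_2 \neq \{0\}},\; \|S\|\bigr),
$$
where $\mathbf{1}_{H_i \neq \{0\}}$ equals $1$ if $H_i$ is nontrivial and $0$ otherwise. Noting $\|S\| \le 1$, this already yields $\|P-Q\| = 1$ whenever $H_1 = \text{Im}\,P \cap \text{Ker}\,Q$ or $H_2 = \text{Ker}\,P \cap \text{Im}\,Q$ is nontrivial.

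Next I would compute $(I-P)Q(I-P)$ in the same block form. Since $I - P$ is the block-diagonal projection onto $H_2 \oplus H_4 \oplus$ (second copy of $K$), the product is again block diagonal with blocks $0$, $I_{H_2}$, $0$, $0$, and the $2 \times 2$ block $\mathrm{diag}(0, S^2)$ on $K \oplus K$. Therefore
$$
\|(I-P)Q(I-P)\| = \max\bigl(\mathbf{1}_{H_2 \neq \{0\}},\; \|S\|^2\bigr),
$$
so $\sqrt{\|(I-P)Q(I-P)\|} = \max(\mathbf{1}_{H_2 \neq \{0\}}, \|S\|)$.

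Finally, compare the two formulas. If $H_1 = H_2 = \{0\}$ then both sides equal $\|S\|$, giving the claimed equality. If $H_1 \neq \{0\}$ or $H_2 \neq \{0\}$, then $\|P-Q\| = 1$ while $\sqrt{\|(I-P)Q(I-P)\|} \le 1$, yielding the inequality. The only place that requires any real work is the identity $M^2 = \mathrm{diag}(S^2, S^2)$, which is a one-line computation from $S^2 + C^2 = I$ and $[S,C]=0$; everything else is bookkeeping within Halmos' decomposition.
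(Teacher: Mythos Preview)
Your proof is correct and follows essentially the same route as the paper's: both reduce to the Halmos block form, read off $\|P-Q\|=1$ from the $H_1$ or $H_2$ blocks, and compute the norm of the $2\times 2$ block $M$ on $K\oplus K$. The only cosmetic difference is that you obtain $\|M\|=\|S\|$ by squaring $M$, whereas the paper factors $M$ as a unitary times $\diag(S,S)$; the computation of $(I-P)Q(I-P)$ is identical.
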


\begin{proof}
	Without loss of generality we may assume that $P$ and $Q$ have the matrix representations (\ref{twop-form}), in which case we have
	$$
		P - Q = \left[ \begin{matrix} { I & 0 & 0 & 0 & 0 & 0 \cr  0 & -I & 0 & 0 & 0 & 0 \cr  0 & 0 & 0 & 0 & 0 & 0 \cr  0 & 0 & 0 & 0 & 0 & 0 \cr  0 & 0 & 0 & 0 & S^2 & -SC \cr  0 & 0 & 0 & 0 & -SC & -S^2 \cr} \end{matrix} \right].
	$$
	Thus, if $H_1 \neq \{0\}$ or $H_2 \neq \{0\}$, then clearly $\|P-Q\| = 1$.
	On the other hand, if $H_1 = H_2 = \{0\}$, then we have
	$$
		\|P-Q\| = \left\|\left[ \begin{matrix} { S^2 & -SC \cr -SC & -S^2 \cr} \end{matrix} \right]\right\| = \left\|\left[ \begin{matrix} { S & -C \cr -C & -S \cr} \end{matrix} \right] \left[ \begin{matrix} { S & 0 \cr 0 & S \cr} \end{matrix} \right]\right\| = \left\|\left[ \begin{matrix} { S & 0 \cr 0 & S \cr} \end{matrix} \right]\right\| $$ $$= \|S\| = \sqrt{ \| S^2 \| } = \sqrt{\left\|\left[ \begin{matrix} { 0 & 0 & 0 & 0 \cr 0 & 0 & 0 & 0 \cr 0 & 0 & 0 & 0 \cr 0 & 0 & 0 & S^2 \cr} \end{matrix} \right]\right\|}
		 = \sqrt{ \| (I-P)Q (I-P) \| },
	$$
	where we observed that $\left[ \begin{matrix} { S & -C \cr -C & -S \cr} \end{matrix} \right]$ is a unitary operator.
	As we obviously have $\| (I-P)Q (I-P) \| \leq 1$, the stated inequality is straightforward.
\end{proof}

From now on till the end of this section $H$ will always denote an infinite-dimensional real or complex separable Hilbert space.

\begin{lemma}\label{edmon}
Let $x \in H$ be a vector of norm one and assume that matrix representations of $P,Q \in P_\infty (H)$ with respect to the orthogonal
direct sum decomposition $H = {\rm span}\, \{x \} \oplus x^\perp$ are
$$
P = \left[ \begin{matrix} { 1 & 0 \cr 0 & P_1 \cr} \end{matrix} \right] \ \ \ {\rm\it and} \ \ \ 
Q = \left[ \begin{matrix} { 0 & 0 \cr 0 & Q_1 \cr} \end{matrix} \right]
$$
for some projections $P_1 , Q_1 \in P_\infty (x^\perp )$. Let $\theta \in (0, {\pi \over 2} )$. Assume further that $R \in P_\infty (H)$
satisfies
$$
\| R - P \| \le \sin \theta \ \ \ {\rm\it and} \ \ \ \|R - Q \| \le \cos \theta .
$$
Then there exists a vector $y\in H$ of norm one such that $x \perp y$ and
 the matrix representations of $P,Q,R$ with respect to the orthogonal
direct sum decomposition $H = {\rm span}\, \{x \} \oplus {\rm span}\, \{ y \} \oplus \{ x, y \}^\perp$ are
$$
P = \left[ \begin{matrix} { 1 & 0 & 0 \cr 0& 0 & 0 \cr 0 & 0 & P_2 \cr} \end{matrix} \right] , \ \ \ 
Q = \left[ \begin{matrix} { 0 & 0 & 0 \cr 0 & 1 & 0  \cr0 &  0 & Q_2 \cr} \end{matrix} \right] ,
$$
and
$$
R =  \left[ \begin{matrix} { \cos^2 \theta & \cos \theta \sin \theta & 0 \cr  \cos \theta \sin \theta & \sin^2 \theta & 0 \cr 0 & 0 & R_2 \cr} \end{matrix} \right]
$$
for some $P_2 , Q_2 , R_2 \in P_\infty ( \{ x,y \}^\perp )$.
Moreover as a consequence, we have that $\| R - P \| = \sin \theta$ and $\|R - Q \| = \cos \theta$.
\end{lemma}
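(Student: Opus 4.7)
My plan is to extract the vector $y$ directly from the shape of $Rx$, use $R^2 = R$ to show that $R$ preserves $\linspan\{x, y\}$ with the asserted $2\times 2$ block, and then test the two norm bounds against a cleverly chosen pair of unit vectors in this plane to pin down the actions of $P$ and $Q$ on $y$.

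First I would write
$$R = \left[ \begin{matrix} a & b^\ast \cr b & R_1 \cr \end{matrix} \right]$$
with respect to $H = \linspan\{x\} \oplus x^\perp$, where $a = \langle Rx, x\rangle$ and $b = Rx - ax \in x^\perp$. Applying $R^2 = R$ to $x$ yields $\|b\|^2 = a(1-a)$, and then $\|(R-P)x\|^2 = 1 - a$ and $\|(R-Q)x\|^2 = a$, combined with the two hypotheses, force $a = \cos^2\theta$ and $\|b\| = \cos\theta \sin\theta$; these computations are tight, so the ``Moreover'' statement comes for free. Since $\theta \in (0, \pi/2)$ we have $b \neq 0$, so I set $y := b/\|b\|$. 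Using $R^2 x = Rx$ once more,
$$Rb = R^2 x - aRx = (1-a)(ax + b),$$
so $Ry = \cos\theta \sin\theta \cdot x + \sin^2\theta \cdot y$ while $Rx = \cos^2\theta \cdot x + \cos\theta \sin\theta \cdot y$. Hence $R$ preserves $\linspan\{x, y\}$, and by self-adjointness also preserves $\{x, y\}^\perp$, producing exactly the stated $2\times 2$ top-left block.

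The remaining task is to show $Py = 0$ and $Qy = y$, since then $P$ and $Q$ also preserve $\linspan\{x, y\}$ and the full block forms follow by self-adjointness, while the claim $P_2, Q_2, R_2 \in P_\infty(\{x,y\}^\perp)$ reduces to a routine dimension count (at most a two-dimensional piece is split off from the image and kernel of each of $P, Q, R$). The key trick is to test the given inequalities on the orthonormal pair
$$u = \cos\theta \cdot x + \sin\theta \cdot y \in \Image R, \qquad u^\perp = -\sin\theta \cdot x + \cos\theta \cdot y \in \Ker R.$$
Writing $Py = cy + v$ with $v \in \{x, y\}^\perp$ and $\|v\|^2 = c(1-c)$ (from $P^2 = P$), a short computation gives $\|(R - P)u^\perp\|^2 = \|Pu^\perp\|^2 = \sin^2\theta + c\cos^2\theta \le \sin^2\theta$, forcing $c = 0$; symmetrically, writing $Qy = dy + w$ yields $\|(R - Q)u\|^2 = \cos^2\theta + \sin^2\theta(1 - d) \le \cos^2\theta$ and forces $d = 1$. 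I expect this last step to be the main subtlety: testing naively at $y$ itself yields only $c(1 - 2\sin^2\theta) \le 0$, whose sign flips at $\theta = \pi/4$; the point of choosing $u^\perp$ and $u$ is precisely that they eliminate the $R$-contribution and give an unconditional conclusion in $c$ and $d$.
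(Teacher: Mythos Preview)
Your proof is correct. The first half---extracting $a=\cos^2\theta$, defining $y=b/\|b\|$, and using $R^2=R$ to obtain the $2\times 2$ block of $R$---is essentially the same as the paper's argument, just phrased in terms of vectors rather than block entries.

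The second half, where you establish $Py=0$ and $Qy=y$, is genuinely different. The paper writes the $2\times 2$ upper-left block of $R-Q$ as a self-adjoint matrix $A$ with unknown entry $q_2=\langle Qy,y\rangle$, computes $\|A\|$ explicitly via the characteristic polynomial, and then shows by a derivative calculation that the function $t\mapsto 1-t+\sqrt{(1-t)^2+4t\cos^2\theta}$ is strictly decreasing on $[0,1]$, forcing $q_2=1$; the argument for $p_2=0$ is symmetric. Your approach sidesteps this analysis entirely by testing $\|R-P\|$ and $\|R-Q\|$ on the eigenvectors $u^\perp\in\Ker R$ and $u\in\Image R$ of $R$ in $\linspan\{x,y\}$: this annihilates the $R$-contribution, and the identities $\|P_1 y\|^2=c$ and $\|(I-Q_1)y\|^2=1-d$ (coming from $P_1^2=P_1$, $Q_1^2=Q_1$) reduce the resulting inequalities to $c\cos^2\theta\le 0$ and $(1-d)\sin^2\theta\le 0$. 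This is shorter and avoids the eigenvalue/monotonicity computation; the paper's route, on the other hand, never needs to identify the eigenvectors of $R$ explicitly and works purely with norms of finite blocks. Both are valid, and your remark that naively testing at $y$ fails past $\theta=\pi/4$ correctly pinpoints why the choice of $u,u^\perp$ is the right one.
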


\begin{proof}
Let the matrix representation of $R$  with respect to the orthogonal
direct sum decomposition $H = {\rm span}\, \{x \} \oplus x^\perp$ be
$$
R = \left[ \begin{matrix} {r_1 & u^\ast \cr u & R_1 \cr} \end{matrix} \right].
$$
From $R^2 = R$ we get
\begin{equation}\label{kojamuz}
r_{1}^2 + \| u \|^2 = r_1 .
\end{equation}
Further, the inequality
$$
\| R - P \| = \left\|  \left[ \begin{matrix} {r_1 -1 & u^\ast \cr u & R_1 - P_1 \cr} \end{matrix} \right]
\right\| \le \sin \theta
$$
yields
\begin{equation}\label{notsogood}
(r_1 - 1)^2 + \| u \|^2 \le \sin^2 \theta
\end{equation}
and similarly,
\begin{equation}\label{dammgood}
r_{1}^2 + \| u \|^2 \le \cos^2 \theta .
\end{equation}
It follows from (\ref{kojamuz}) and (\ref{dammgood}) that $r_1  \le \cos^2 \theta$, while 
(\ref{kojamuz}) and (\ref{notsogood}) imply $1- r_1 \le \sin^2 \theta$, or equivalently $r_1 \ge \cos^2 \theta$.

Thus, $r_{1} = \cos^2 \theta$, and consequently, by (\ref{kojamuz}) we have $\| u \|^2 = \cos^2 \theta \sin^2 \theta$.
Setting $y  = (\cos \theta \sin \theta )^{-1} u$, the matrix representations
of $P,Q,R $ with respect to the orthogonal decomposition $H = \linspan\{x\} \oplus \linspan\{y\} \oplus \{ x , y \}^\perp$ are
$$
P = \left[ \begin{matrix} { 1 & 0 & 0 \cr 0 & p_2 & v^\ast \cr 0 & v & P_2 \cr} \end{matrix} \right] , \ \ \ 
Q = \left[ \begin{matrix} { 0 & 0 & 0 \cr 0 & q_2 & w^\ast \cr 0 & w & Q_2 \cr} \end{matrix} \right] , $$ and $$
R= \left[ \begin{matrix} { \cos^2 \theta & \cos \theta \sin \theta & 0 \cr \cos \theta \sin \theta & r_2 & z^\ast \cr 0 & z & R_2 \cr} \end{matrix} \right] ,
$$
for some $p_2, q_2, r_2 \in [0,1]$, $v,w,z \in \{ x,y \}^\perp$, and some $P_2, Q_2, R_2 \in B(\{ x , y \}^\perp)$.
It follows from $R^2 = R$ that
$$
\cos^3 \theta \sin \theta + r_2 \cos \theta \sin \theta = \cos \theta \sin \theta .
$$
Therefore, $r_2 = \sin^2 \theta$, and then applying $R^2= R$ once more, we conclude that $z=0$.

The inequality
$$
\| R - Q \| \le \cos \theta
$$
implies that
$$
\left\| 
 \left[ \begin{matrix} {  \cos^2 \theta & \cos \theta \sin \theta  \cr \cos \theta \sin \theta & \sin^2 \theta - q_2 \cr } \end{matrix} \right]
\right\| \le \cos\theta .
$$
Denoting $$A =  \left[ \begin{matrix} {  \cos^2 \theta & \cos \theta \sin \theta  \cr \cos \theta \sin \theta & \sin^2 \theta - q_2 \cr } \end{matrix} \right]$$
we have ${\rm tr}\, A = 1 - q_2 \ge 0$ and $\det A = - q_2 \cos^2 \theta$. Because $\| A \| < 1$ we have $q_2 \not=0$, and consequently,
$\det A < 0$ which further yields that $A$ has one positive and one negative eigenvalue. Since the trace of $A$ is nonnegative, the norm of $A$ equals the positive
eigenvalue. The characteristic polynomial of $A$ is of the form $p(X) = X^2 - ({\rm tr}\, A) X + \det A$, and hence
$$
2 \| A \| = {\rm tr}\, A + \sqrt{ ({\rm tr}\, A)^2 - 4 \det A} = 1 - q_2 + \sqrt{ (1-q_2 )^2 + 4 q_2 \cos^2 \theta } .
$$
It is straightforward to check that the derivative of the real function $f$ defined on the unit interval $[0,1]$ by
$$
f(t) =   1 - t + \sqrt{ (1-t )^2 + 4 t \cos^2 \theta }
$$
is negative, and thus the minimal value of the function $f$ on the unit interval is $f(1) = 2 \cos \theta$. Since $2 \| A \| \le 2 \cos \theta$ we conclude that $q_2 =1$.
Then $Q \le I$ immediately yields that $w=0$.

In the same way we verify that $p_2 = 0$ and $v=0$.
\end{proof}

For any projection $P \in P_\infty (H)$ and any positive real number $c$ we denote 
$$
P^{\le c} = \{ R \in P_\infty (H) \colon \, \| R - P \| \le c \}.
$$

Let $P,Q \in P_\infty (H)$. Set $H_1 = {\rm Im}\, P \cap {\rm Ker}\, Q$ and $H_2 = {\rm Ker}\, P \cap {\rm Im}\, Q$. Note that one or both of these two subspaces might be trivial. Obviously, they are orthogonal and both invariant for both $P$ and $Q$. Hence, with respect to the orthogonal direct sum decomposition
$ H = H_1 \oplus H_2 \oplus H_3$ the projections $P$ and $Q$ have the matrix representations
$$
P = \left[ \begin{matrix} { I & 0 & 0 \cr 0 & 0 &0 \cr 0 & 0 & P_1} \end{matrix} \right] \ \ \ {\rm and} \ \ \
Q = \left[ \begin{matrix} { 0 & 0 & 0 \cr 0 & I &0 \cr 0 & 0 & Q_1} \end{matrix} \right],
$$
where $P_1$ and $Q_1$ are projections on $H_3$ with the property that  ${\rm Im}\, P_1 \cap {\rm Ker}\, Q_1 = \{ 0 \}$ and ${\rm Ker}\, P_1 \cap {\rm Im}\, Q_1 = \{ 0 \}$.

\begin{lemma}\label{etko}
Let $P$ and $Q$ be as above and $\theta \in (0, {\pi \over 2} )$. Let $R\in P_\infty (H)$ and assume that
$$
R \in P^{\le \sin \theta} \cap Q^{\le \cos \theta}.
$$
Then there exists a unitary operator $U\colon H_1 \to H_2$ and a projection $R_1$ on $H_3$ such that
$$
R = \left[ \begin{matrix} { (\cos^2 \theta) I & (\sin \theta \cos \theta) U^\ast & 0 \cr (\sin \theta \cos \theta) U & (\sin^2 \theta)  I &0 \cr 0 & 0 & R_1} \end{matrix} \right] .
$$
In particular, if $\dim H_1 \not= \dim  H_2$, then the set $P^{\le \sin \theta} \cap Q^{\le \cos \theta}$ is empty.
\end{lemma}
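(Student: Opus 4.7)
The plan is to extract a linear map $T\colon H_1 \to H_2$ from the previous lemma applied pointwise, and then show that $T$ is a scalar multiple of a unitary, using a symmetric argument on $H_2$ together with self-adjointness of $R$.

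First, I would fix an arbitrary unit vector $x \in H_1$. Since $Px = x$ and $Qx = 0$, the matrix representations of $P$ and $Q$ with respect to $\linspan\{x\} \oplus x^\perp$ have exactly the block form required by Lemma~\ref{edmon}. That lemma then produces a unit vector $y = y(x) \perp x$ with $Py = 0$ and $Qy = y$, so $y \in H_2$, satisfying
\[
Rx = (\cos^2\theta)\, x + (\cos\theta\sin\theta)\, y,
\]
with $Rx$ having zero component in $\{x, y\}^\perp$, and in particular in $H_3$.

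Next, define $T\colon H_1 \to H_2$ by $Tx := P_{H_2}(Rx)$, where $P_{H_2}$ denotes the orthogonal projection of $H$ onto $H_2$. The map $T$ is linear, and the previous step gives $\|Tx\| = \cos\theta\sin\theta$ for every unit $x \in H_1$, hence $T = (\cos\theta\sin\theta)\, U$ for a linear isometry $U\colon H_1 \to H_2$. Applying Lemma~\ref{edmon} symmetrically --- to each unit vector of $H_2$ with $(P, Q, \theta)$ replaced by $(Q, P, \pi/2 - \theta)$ --- yields a linear isometry $V\colon H_2 \to H_1$ such that $P_{H_1}(Ry) = (\cos\theta\sin\theta)\, Vy$ and $Ry$ has zero $H_3$-component. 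Self-adjointness of $R$ now gives $\langle Ux, y\rangle = \langle x, Vy\rangle$ for all $x \in H_1$, $y \in H_2$, so $V = U^*$. Since $V$ is an isometry, $UU^* = I_{H_2}$; combined with $U^*U = I_{H_1}$, this makes $U$ unitary.

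Finally, the two symmetric applications of Lemma~\ref{edmon} show that $R$ maps $H_1 \oplus H_2$ into itself, so the blocks $(3,1)$ and $(3,2)$ of $R$ vanish, and self-adjointness then forces $(1,3)$ and $(2,3)$ to vanish too. The four blocks on $H_1 \oplus H_2$ can be read off from the lemma: diagonals $(\cos^2\theta)\, I_{H_1}$ and $(\sin^2\theta)\, I_{H_2}$, off-diagonals $(\cos\theta\sin\theta)\, U^*$ and $(\cos\theta\sin\theta)\, U$. The $(3,3)$ block $R_1$ is automatically a projection because $R^2 = R$ and the cross blocks vanish. The ``in particular'' statement is then immediate: a unitary $U\colon H_1 \to H_2$ exists only when $\dim H_1 = \dim H_2$. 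The main obstacle I anticipate lies in packaging the pointwise data from Lemma~\ref{edmon} into a single linear unitary: the lemma delivers, for each unit $x$, a unit $y(x)$, with no a priori control on how $y(x)$ depends on $x$. The key trick is to recognise the map $x \mapsto (\cos\theta\sin\theta)\, y(x)$ as $P_{H_2}\circ R|_{H_1}$, which is linear for free; upgrading from isometry to unitary then requires the symmetric argument on $H_2$, tied together via self-adjointness of $R$.
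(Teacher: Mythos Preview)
Your proposal is correct and follows essentially the same approach as the paper's proof: apply Lemma~\ref{edmon} pointwise on $H_1$ to extract a linear isometry $U\colon H_1\to H_2$ (the paper phrases this as ``the restriction of $R-(\cos^2\theta)I$ to $H_1$'' rather than $P_{H_2}\circ R|_{H_1}$, but these coincide), repeat symmetrically on $H_2$ to get an isometry $V\colon H_2\to H_1$, and use self-adjointness of $R$ to identify $V=U^\ast$ and conclude that $U$ is unitary. Your discussion of the possible obstacle and its resolution is exactly the point the paper's proof is making implicitly.
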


\begin{proof}
Take any unit vector $x \in H_1$. By Lemma \ref{edmon} we have $Rx = (\cos^2 \theta) x + v$, where $v$ is some vector in $H_2$ with norm $\cos \theta \sin \theta$. Hence,
the restriction of $R - (\cos^2 \theta) I$ to $H_1$ maps $H_1$ into $H_2$. This restriction is a linear isometry multiplied by $\cos \theta \sin\theta$. It follows that $R$ must be of the form
$$
R = \left[ \begin{matrix} { (\cos^2 \theta) I & * & * \cr (\sin \theta \cos \theta) U & * &* \cr 0 & * & * \cr} \end{matrix} \right] ,
$$ 
where $U$ is an isometric embedding of $H_1$ into $H_2$.
In exactly the same  way we see that
$$
R = \left[ \begin{matrix} { * & (\sin \theta \cos \theta) V & * \cr * & (\sin^2 \theta)  I & *\cr * & 0 & *} \end{matrix} \right] 
$$
with $V$ being an isometric embedding of $H_2$ into $H_1$. It follows that $H_1$ and $H_2$ are of the same dimension. Since $R$ is self-adjoint, we have necessarily $V = U^\ast$. In
particular, $U$ is surjective. It follows that $R$ is of the desired form.
\end{proof}

\begin{lemma}\label{and}
Let $P,Q,R \in P_\infty (H)$ with $P$ and $Q$ orthogonal and 
$$
\| R - P \| = \| R - Q \| = {1 \over \sqrt{2}}.
$$
Then there exists exactly one mapping $\gamma\colon [ 0, {\pi \over 2} ] \to P_\infty (H)$ such that
$$
\gamma (0) = P, \ \ \ \gamma \left({\pi \over 2} \right) = Q, \ \ \ {\rm\it and} \ \ \  \gamma \left({\pi \over 4} \right) = R,
$$
and
\begin{equation}\label{usu}
\| \gamma (\theta_1 ) - \gamma (\theta_2) \| = \sin | \theta_1 - \theta_2 |
\end{equation}
for all $\theta_1 , \theta_2 \in  [ 0, {\pi \over 2} ]$.

Moreover, the exact same statement holds if $I-P$ and $I-Q$ are orthogonal.
\end{lemma}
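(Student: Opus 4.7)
The plan is to (i) apply Lemma \ref{etko} to pin down the form of $R$ (and of any candidate $\gamma'(\theta)$), (ii) write down an explicit candidate $\gamma$ and verify (\ref{usu}) by direct computation, and (iii) reduce uniqueness to a short spectral statement about a unitary on $\Image P$; the main obstacle is this final spectral step.

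\textbf{Existence.} Orthogonality of $P,Q$ makes the decomposition preceding Lemma \ref{etko} take the form $H = H_1 \oplus H_2 \oplus H_3$ with $H_1 = \Image P$, $H_2 = \Image Q$, and $H_3 = \Ker P \cap \Ker Q$; both $H_1, H_2$ are infinite-dimensional because $P,Q \in P_\infty(H)$. Applying Lemma \ref{etko} at $\theta = \pi/4$ to $R$ yields a unitary $U\colon H_1 \to H_2$ and a projection $R_1$ on $H_3$ giving the stated block form. Since the $H_3$-block of $R - P$ is $R_1$ itself, $\|R_1\| \le 1/\sqrt{2} < 1$ forces $R_1 = 0$ (a projection of norm less than $1$ must vanish). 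I would then propose
\[
\gamma(\theta) = \begin{pmatrix} \cos^2\theta\cdot I & \sin\theta\cos\theta\cdot U^{\ast} & 0\\ \sin\theta\cos\theta\cdot U & \sin^2\theta\cdot I & 0\\ 0 & 0 & 0 \end{pmatrix},
\]
check that $\gamma(\theta)$ is a projection in $P_\infty(H)$ with the prescribed values at $0, \pi/4, \pi/2$, and verify (\ref{usu}) by observing that $(\gamma(\theta_1) - \gamma(\theta_2))^2$ is a scalar multiple of the identity on $H_1 \oplus H_2$ (and zero on $H_3$), the scalar simplifying via double-angle identities to $\sin^2(\theta_1 - \theta_2)$.

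\textbf{Uniqueness and the main step.} Let $\gamma'$ be any map satisfying the hypotheses. For each $\theta \in (0, \pi/2)$, (\ref{usu}) places $\gamma'(\theta) \in P^{\le \sin\theta} \cap Q^{\le \cos\theta}$, so Lemma \ref{etko} produces a unitary $U_\theta\colon H_1 \to H_2$ and a projection $R_{1,\theta}$ on $H_3$; the $H_3$-block argument again gives $R_{1,\theta} = 0$ (using $\sin\theta < 1$), and $\gamma'(\pi/4) = R$ identifies $U_{\pi/4}$ with the $U$ already found. The remaining, and main, task is to prove $U_\theta = U$ for every $\theta$. Set $V_\theta := U^{\ast} U_\theta$, a unitary on $H_1$. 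Conjugating the $H_1 \oplus H_2$ part of $\gamma'(\theta) - \gamma'(\pi/4)$ by $I \oplus U^{\ast}$ and squaring, the off-diagonal blocks cancel and one obtains
\[
\bigl(\gamma'(\theta) - \gamma'(\pi/4)\bigr)^2 = \tfrac{1}{2}\bigl(I - \sin 2\theta \cdot \mathrm{Re}\, V_\theta\bigr) \oplus \tfrac{1}{2}\bigl(I - \sin 2\theta \cdot \mathrm{Re}\, V_\theta\bigr) \oplus 0.
\]
Hence $\|\gamma'(\theta) - \gamma'(\pi/4)\|^2 = \sin^2|\theta - \pi/4| = (1 - \sin 2\theta)/2$ becomes $\|I - \sin 2\theta \cdot \mathrm{Re}\, V_\theta\| = 1 - \sin 2\theta$. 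Since $\mathrm{Re}\, V_\theta$ is self-adjoint with spectrum in $[-1, 1]$, the positive operator $I - \sin 2\theta \cdot \mathrm{Re}\, V_\theta$ has spectrum in $[1 - \sin 2\theta,\, 1 + \sin 2\theta]$, and its norm equals $1 - \sin 2\theta$ only when $\sigma(\mathrm{Re}\, V_\theta) = \{1\}$, i.e.\ $\mathrm{Re}\, V_\theta = I$. For the unitary $V_\theta$ this forces $V_\theta = I$, so $U_\theta = U$, completing uniqueness.

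\textbf{The ``moreover'' clause.} If $I - P$ and $I - Q$ are orthogonal, apply what has just been proved to the triple $(I-P, I-Q, I-R)$, admissible because $I - X \in P_\infty(H)$ whenever $X \in P_\infty(H)$ and $\|(I-X) - (I-Y)\| = \|X - Y\|$, to obtain a unique isometric arc $\tilde\gamma$, and set $\gamma(\theta) := I - \tilde\gamma(\theta)$; uniqueness transfers verbatim.
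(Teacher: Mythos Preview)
Your proof is correct and follows essentially the same strategy as the paper: apply Lemma~\ref{etko} to force the block structure, kill the $H_3$-block, write down the explicit geodesic, and for uniqueness reduce to showing that a unitary $V$ on $\Image P$ must equal $I$ via the condition on $V+V^\ast$. The only tactical difference is in that last step: the paper conjugates by the rotation $\tfrac{1}{\sqrt{2}}\left[\begin{smallmatrix} I & I\\ -I & I\end{smallmatrix}\right]$ to diagonalise $R$ and then invokes Proposition~\ref{ninana-es-jiba}, whereas you square $\gamma'(\theta)-R$ directly and read off the block-diagonal form; both routes land on the identical spectral constraint $\|I-\sin 2\theta\cdot\mathrm{Re}\,V_\theta\|=1-\sin 2\theta$, so the difference is cosmetic.
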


\begin{proof}
Since $P$ and $Q$ are orthogonal we have an orthogonal direct sum decomposition $H = H_1 \oplus H_2 \oplus H_3$ (with $H_3$ possibly a zero subspace) such that the corresponding matrix 
representations are
$$
P = \left[ \begin{matrix} { I & 0 & 0 \cr 0 & 0 & 0 \cr 0 & 0 & 0 \cr} \end{matrix} \right] \ \ \ {\rm and} \ \ \ 
Q = \left[ \begin{matrix} { 0 & 0 & 0 \cr 0 & I & 0 \cr 0 & 0 & 0 \cr} \end{matrix} \right] .
$$
We need to prove the existence and the uniqueness of $\gamma$ with the above properties. By (\ref{usu}), we will have
\begin{equation}\label{muci}
\gamma( \theta) 
\in P^{\le \sin \theta} \cap Q^{\le \cos \theta}
\end{equation}
for all $\theta \in  [ 0, {\pi \over 2} ]$
and thus, by Lemma \ref{etko}, we will further have
$$
\gamma (\theta) = \left[ \begin{matrix} { * & * & 0 \cr * & * &0 \cr 0 & 0 & T} \end{matrix} \right] 
$$
for some projection $T$ on $H_3$. But $T$ has to be zero. Indeed, if $T$ was nonzero we would have $\| \gamma(\theta) - P \| = 1 = \| \gamma(\theta) - Q \|$.
Hence, all projections $\gamma (\theta)$ will have nonzero entries only in the upper-left $2 \times 2$ corner. In other words, there is no loss of generality in assuming
that $H_3 = {0}$, and then
$$
P = \left[ \begin{matrix} { I  & 0 \cr 0 & 0  \cr} \end{matrix} \right] \ \ \ {\rm and} \ \ \ 
Q = \left[ \begin{matrix} { 0 & 0  \cr 0 & I  \cr} \end{matrix} \right] .
$$
Also, since $H_1$ and $H_2$ are of the same dimension, they can be identified, and thus, $H$ can be considered as the orthogonal direct sum of two copies of $H_1$.

Using Lemma \ref{etko} once more we see that $R$ must be of the form
$$
R = \left[ \begin{matrix} { {1 \over 2} I  &  {1 \over 2} U \cr  {1 \over 2} U^\ast &    {1 \over 2} I\cr} \end{matrix} \right ]
$$
for some unitary operator $U$ on $H_1$. Replacing $P$, $Q$, and $R$ by $WPW^\ast$, $WQW^\ast$, and $WRW^\ast$, where $W$ is the unitary operator given by
$$
W =  \left[ \begin{matrix} { I  &  0 \cr  0 &    U \cr} \end{matrix} \right ],
$$
we may assume that
$$
R = \left[ \begin{matrix} { {1 \over 2} I  &  {1 \over 2} I \cr  {1 \over 2} I &    {1 \over 2} I\cr} \end{matrix} \right ].
$$

It is an elementary linear algebra exercise to show that the map $\gamma\colon [ 0, {\pi \over 2} ] \to P_\infty (H)$ defined by
$$
\gamma (\theta ) = \left[ \begin{matrix} { (\cos^2 \theta) I  & (\cos \theta \sin \theta ) I \cr
 (\cos \theta \sin \theta ) I & (\sin^2 \theta ) I \cr} \end{matrix} \right], \ \ \ \theta \in 
\left[ 0, {\pi \over 2} \right],
$$
satisfies (\ref{usu}). Thus, the proof will be completed if we show that for any $\theta \in ( 0, {\pi \over 2})$ the projection $\gamma (\theta )$ above is the unique
projection satisfying (\ref{usu}) with $\theta_1 = \theta$ and $\theta_2 = 0, {\pi \over 4}, {\pi \over 2}$.

So, choose $\theta \in ( 0, {\pi \over 2})$ and assume that $S\in P_\infty (H)$
is a projection such that 
$$
S \in 
 P^{\le \sin \theta} \cap Q^{\le \cos \theta}
$$
and $\| S - R \| = \sin |{\pi \over 4} - \theta |$. Then, by  Lemma \ref{etko} we have
$$
S =  \left[ \begin{matrix} { (\cos^2 \theta) I  & (\cos \theta \sin \theta ) V^\ast \cr
 (\cos \theta \sin \theta ) V & (\sin^2 \theta ) I \cr} \end{matrix} \right] 
$$
for some unitary operator $V$. We will complete the proof by showing that 
$\| S - R \| = \sin |{\pi \over 4} - \theta |$ implies that $V=I$.

Observe that
$$
\left[ \begin{matrix} { {1 \over \sqrt{2}} I &  {1 \over \sqrt{2}}I \cr -  {1 \over \sqrt{2}} I &  {1 \over \sqrt{2}} I \cr} \end{matrix} \right] \, 
 R  \,
\left[ \begin{matrix} { {1 \over \sqrt{2}} I &  - {1 \over \sqrt{2}}I \cr   {1 \over \sqrt{2}} I &  {1 \over \sqrt{2}} I \cr} \end{matrix} \right] 
=  \left[ \begin{matrix} { I  &  0 \cr  0 &    0 \cr} \end{matrix} \right ],
$$
and
$$
\left[ \begin{matrix} { {1 \over \sqrt{2}} I &  {1 \over \sqrt{2}}I \cr -  {1 \over \sqrt{2}} I &  {1 \over \sqrt{2}} I \cr} \end{matrix} \right] \, 
 S  \,
\left[ \begin{matrix} { {1 \over \sqrt{2}} I &  - {1 \over \sqrt{2}}I \cr   {1 \over \sqrt{2}} I &  {1 \over \sqrt{2}} I \cr} \end{matrix} \right] 
=  \left[ \begin{matrix} { * &  * \cr  * &    {1 \over 2} [ I - \cos \theta \sin \theta (V + V^\ast) ] \cr} \end{matrix} \right ],
$$
and therefore, by Proposition \ref{ninana-es-jiba} we have
$$
 \sin \left|{\pi \over 4} - \theta \right| = \| S - R \| = \left \|   \left[ \begin{matrix} { I  &  0 \cr  0 &    0 \cr} \end{matrix} \right ] - 
 \left[ \begin{matrix} { * &  * \cr  * &    {1 \over 2} [ I - \cos \theta \sin \theta (V + V^\ast) ] \cr} \end{matrix} \right ] \right\|
$$
$$
\ge \sqrt{ \left\|  {1 \over 2} I - {1 \over 2} \cos \theta \sin \theta (V + V^\ast) \right\| } \ge \sqrt{ {1 \over 2} - \cos \theta \sin \theta } = 
\sin \left|{\pi \over 4} - \theta \right|.
$$
Since all inequalities are actually equalities, we have $V=I$, as desired.

For the case when $I-P$ and $I-Q$ are orthogonal we only have to observe that $P\mapsto I-P$ is a bijective isometry of $P_\infty(H)$.
\end{proof}

\begin{lemma}\label{slomk}
Let $L$ be a Hilbert space (finite or infinite-dimensional)
having the orthogonal direct sum decomposition $L = K \oplus K$ and assume that $P,Q \in P_{\dim K} (L)$ have the corresponding matrix
representations
$$
P = \left[ \begin{matrix} { I & 0 \cr 0 & 0 \cr} \end{matrix} \right] \ \ \ {\rm\it and} \ \ \ 
Q =  \left[ \begin{matrix} { C^2 & SC \cr SC & S^2 \cr} \end{matrix} \right]
$$
with $S, C$ self-adjoint injective operators satisfying $0 \le S,C \le I$ and $S^2 + C^2 = I$. Denote $\| S \| = \sin \psi$, $0 \le \psi \le { \pi \over 2}$.
We define a map $\alpha\colon [0 , {\pi \over 2}] \to B(L)$ by
$$
\alpha (\theta ) 
=\left[ \begin{matrix} {
\cos^2 \left( {2 \theta \over \pi} \arcsin S \right) & \cos  \left( {2 \theta \over \pi} \arcsin S \right)  \sin  \left( {2 \theta \over \pi} \arcsin S \right)  \cr
\cos  \left( {2 \theta \over \pi} \arcsin S \right)  \sin  \left( {2 \theta \over \pi} \arcsin S \right) & \sin^2  \left( {2 \theta \over \pi} \arcsin S \right) \cr
} \end{matrix} \right] 
$$
for $\theta \in \left[ 0, {\pi \over 2} \right]$.
Then 
$$
\alpha (\theta) \in P_{\dim K} (L)
$$
for every $\theta \in [ 0 , {\pi \over 2}]$,
$$
\alpha (0) = P , \ \ \ \alpha \left( {\pi \over 2} \right) = Q,
$$
and
$$
\| \alpha ( \theta_1 ) - \alpha (\theta_2 ) \| = \sin \left( {2 |\theta_1 - \theta_2 | \over \pi} \psi  \right) 
$$
for all pairs $\theta_1 , \theta_2 \in [0, {\pi \over 2}]$.
\end{lemma}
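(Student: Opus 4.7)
The plan is to reduce everything to the functional calculus of the single positive self-adjoint operator $A = \arcsin S$ on $K$, which is well-defined since $0 \le S \le I$ and whose spectrum lies in $[0,\psi]$ with $\psi \in \sigma(A)$ (as $\|S\| = \sin \psi$ and $\arcsin$ is an increasing homeomorphism of $[0,1]$ onto $[0,\pi/2]$). Setting $\tau_j = \frac{2\theta_j}{\pi} A$, $c_j = \cos \tau_j$, and $s_j = \sin \tau_j$, the fact that $C = (I-S^2)^{1/2}$ is itself a function of $S$ ensures that every pair of operators in sight commutes, so the usual trigonometric identities hold at the operator level.

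First I would verify the projection property: self-adjointness is clear from the matrix form and $\alpha(\theta)^2 = \alpha(\theta)$ follows from $c_j^2 + s_j^2 = I$ by a direct $2 \times 2$ block computation. The rank is identified by writing $\alpha(\theta) = V V^*$ where $V \colon K \to L$ sends $v$ to $(c_j v, s_j v)$; since $V^* V = c_j^2 + s_j^2 = I$, $V$ is an isometric embedding, so $\Image \alpha(\theta)$ has dimension $\dim K$. The boundary values $\alpha(0) = P$ and $\alpha(\pi/2) = Q$ are immediate after noting $\cos(\arcsin S) = C$.

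The heart of the proof is the norm identity. I would apply the double-angle identities to rewrite each $\alpha(\theta_j)$ in terms of $\cos 2\tau_j$ and $\sin 2\tau_j$, then subtract and use the sum-to-product formulas (legitimate in the commutative functional calculus of $A$) to obtain the factorization
\begin{equation*}
\alpha(\theta_1) - \alpha(\theta_2) = \sin(\tau_1 - \tau_2) \cdot M, \qquad M = \left[ \begin{matrix} -\sin(\tau_1 + \tau_2) & \cos(\tau_1 + \tau_2) \\ \cos(\tau_1 + \tau_2) & \sin(\tau_1 + \tau_2) \end{matrix} \right],
\end{equation*}
where $\sin(\tau_1 - \tau_2)$ is interpreted as the scalar block $\sin(\tau_1-\tau_2) \oplus \sin(\tau_1-\tau_2)$. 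A direct check gives $M = M^*$ and $M^2 = I$, so $M$ is a self-adjoint unitary; since $\sin(\tau_1 - \tau_2)$ commutes with every entry of $M$, the identity $(\sin(\tau_1-\tau_2) \cdot M)^*(\sin(\tau_1-\tau_2) \cdot M) = \sin(\tau_1-\tau_2)^2$ yields $\|\alpha(\theta_1) - \alpha(\theta_2)\| = \|\sin(\tau_1 - \tau_2)\|$.

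Finally, $\tau_1 - \tau_2 = \frac{2(\theta_1 - \theta_2)}{\pi} A$ has spectrum contained in the interval between $0$ and $\frac{2(\theta_1-\theta_2)}{\pi}\psi$, which lies in $[-\pi/2, \pi/2]$, and the endpoint $\frac{2(\theta_1-\theta_2)}{\pi}\psi$ is attained because $\psi \in \sigma(A)$. Monotonicity of $|\sin|$ on $[0, \pi/2]$ then gives $\|\sin(\tau_1 - \tau_2)\| = \sin\bigl(\tfrac{2|\theta_1 - \theta_2|}{\pi} \psi\bigr)$, which is the claimed formula. The only genuinely non-trivial step is spotting the factorization $\alpha(\theta_1) - \alpha(\theta_2) = \sin(\tau_1 - \tau_2) \cdot M$ together with the observation that $M^2 = I$; after this, the operator-norm problem collapses to a scalar functional-calculus computation and everything else is routine bookkeeping.
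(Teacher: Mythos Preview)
Your proof is correct. The route differs from the paper's only in execution, not in spirit. For the rank, the paper argues that rank and corank are constant along a norm-continuous path of projections, whereas you exhibit the explicit isometry $V$ with $\alpha(\theta)=VV^\ast$; your argument is more direct. For the norm identity, the paper first writes $\alpha(\theta)=U_\theta P U_\theta^\ast$ with $U_\theta=\bigl[\begin{smallmatrix}C_\theta & -S_\theta\\ S_\theta & C_\theta\end{smallmatrix}\bigr]$, uses the one-parameter group relation $U_{\theta_1}^\ast U_{\theta_2}=U_{\theta_2-\theta_1}$ to reduce to $\|\alpha(0)-\alpha(\theta_2-\theta_1)\|$, and then invokes Proposition~\ref{ninana-es-jiba} (whose proof is precisely the ``scalar times self-adjoint unitary'' factorization you rediscover). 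You instead apply the sum-to-product identities to $\alpha(\theta_1)-\alpha(\theta_2)$ directly, obtaining the factorization $\sin(\tau_1-\tau_2)\cdot M$ with $M^2=I$ in one stroke. The paper's version makes the geodesic/group structure visible and reuses an earlier proposition; yours is self-contained and slightly shorter. Both terminate in the same spectral-mapping computation of $\|\sin(\tau_1-\tau_2)\|$.
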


\begin{proof}
The verification of 
$\alpha (0) = P$ and $\alpha \left( {\pi \over 2} \right) = Q$ is straightforward, since $\arcsin S = \arccos C$.
It is also clear that $\alpha (\theta)$ is a projection for all
$\theta \in [ 0 , {\pi \over 2}]$. If $\Gamma \subset P(H)$ is a curve in the set of all projections then
all projections on this curve have the same rank and the same corank. Consequently, $\alpha (\theta) \in P_{\dim K} (L)$ for all
$\theta \in [ 0 , {\pi \over 2}]$.

We set 
$$
C_\theta = \cos \left( {2 \theta \over \pi} \arcsin S \right) , \ \ \   S_\theta = \sin \left( {2 \theta \over \pi} \arcsin S \right),
$$
and
$$
U_\theta = \left[ \begin{matrix} { C_\theta & - S_\theta \cr S_\theta & C_\theta \cr} \end{matrix} \right] .
$$
Clearly, each $U_\theta$ is a unitary operator. Observe that
$$
\alpha ( \theta ) = U_\theta \left[ \begin{matrix} { I & 0 \cr 0 & 0 \cr} \end{matrix} \right] U_{\theta}^\ast =
 U_\theta \alpha (0) U_{\theta}^\ast .
$$
Moreover, for every pair $\theta_1 , \theta_2$ satisfying $ 0 \le \theta_1 < \theta_2 \le {\pi \over 2}$ we have
$$
U_{\theta_1}^\ast U_{\theta_2} =  
 \left[ \begin{matrix} { C_{\theta_1} C_{\theta_2} +   S_{\theta_1} S_{\theta_2} &  S_{\theta_1} C_{\theta_2} -   C_{\theta_1} S_{\theta_2} 
 \cr  
 C_{\theta_1} S_{\theta_2} -   S_{\theta_1} C_{\theta_2}
&     C_{\theta_1} C_{\theta_2} +   S_{\theta_1} S_{\theta_2}
\cr} \end{matrix} \right] 
$$
$$
=  \left[ \begin{matrix} { C_{\theta_2 - \theta_1} & - S_{\theta_2 - \theta_1} \cr S_{\theta_2 - \theta_1} & C_{\theta_2 - \theta_1} \cr} \end{matrix} \right] = U_{\theta_2 - \theta_1}.
$$
Therefore,
$$
\| \alpha (\theta_1) - \alpha (\theta_2) \| = \left\| U_{\theta_1}  \left[ \begin{matrix} { I & 0 \cr 0 & 0 \cr} \end{matrix} \right] U_{\theta_1}^\ast
-  U_{\theta_2}  \left[ \begin{matrix} { I & 0 \cr 0 & 0 \cr} \end{matrix} \right] U_{\theta_2}^\ast \right\|
$$
$$
=\left\|  \left[ \begin{matrix} { I & 0 \cr 0 & 0 \cr} \end{matrix} \right] - U_{\theta_1}^\ast 
U_{\theta_2}  \left[ \begin{matrix} { I & 0 \cr 0 & 0 \cr} \end{matrix} \right] (U_{\theta_1}^\ast U_{\theta_2})^\ast \right\|
$$
$$
= \left\|  \left[ \begin{matrix} { I & 0 \cr 0 & 0 \cr} \end{matrix} \right] - 
U_{\theta_2 - \theta_1}  \left[ \begin{matrix} { I & 0 \cr 0 & 0 \cr} \end{matrix} \right]  U_{\theta_2 - \theta_1}^\ast \right\|
= \| \alpha (0) - \alpha ( \theta_2 - \theta_1 ) \| .
$$
Consequently, by Proposition \ref{ninana-es-jiba} we have
$$
\| \alpha (\theta_1) - \alpha (\theta_2) \|
= \left\| \sin \left[ { 2 ( \theta_2 - \theta_1) \over \pi} \arcsin S \right] \right\| = \sin \left( { 2 ( \theta_2 - \theta_1) \over \pi} \psi \right),
$$
as desired.
\end{proof}

\begin{lemma}\label{paseen}
Assume that $P,Q \in P_\infty (H)$ have matrix representations
$$
P = \left[ \begin{matrix} {I & 0 & 0  & 0\cr 0&  0 & 0 & 0 \cr 0 & 0 & I & 0 \cr  0&  0 & 0 & 0 \cr} \end{matrix} \right] \ \ \ {\rm\it and} \ \ \
Q = \left[ \begin{matrix} {0 & 0 & 0 & 0\cr 0 &   I & 0 & 0   \cr  0 & 0 & I & 0 \cr  0&  0 & 0 & 0 \cr} \end{matrix} \right]
$$
with respect to the orthogonal direct sum decomposition $H = H_1 \oplus H_2 \oplus H_3 \oplus H_4$ and assume that $\dim H_1 = \dim H_2 \not=0$, $\dim H_3 \not=0$ and $\dim H_4 \not=0$.

Then there exists
$R \in P_\infty (H)$ such that
$$
\| R - P \| = \| R - Q \| = {1 \over \sqrt{2}}
$$
and there exist more than just one mapping $\gamma\colon [ 0, {\pi \over 2} ] \to P_\infty (H)$ with the properties
$$
\gamma (0) = P, \ \ \ \gamma \left({\pi \over 2} \right) = Q, \ \ \ {\rm\it and} \ \ \  \gamma \left({\pi \over 4} \right) = R,
$$
and
$$
\| \gamma (\theta_1 ) - \gamma (\theta_2) \| = \sin | \theta_1 - \theta_2 |
$$
for all $\theta_1 , \theta_2 \in  [ 0, {\pi \over 2} ]$.
\end{lemma}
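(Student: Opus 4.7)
The plan is to exploit the extra freedom provided by the two non-trivial summands $H_3$ and $H_4$, which is absent in the setting of Lemma \ref{and}. Since $\dim H_3, \dim H_4 \ge 1$, I can pick unit vectors $e_3 \in H_3$ and $e_4 \in H_4$; on the two-dimensional subspace $K := \linspan\{e_3, e_4\}$ both $P$ and $Q$ act as the rank-one projection onto $e_3$. I will choose a ``standard'' midpoint $R$ that also acts trivially on $H_3 \oplus H_4$, construct the obvious geodesic $\gamma_1$ through $P, R, Q$ (also trivial on $H_3 \oplus H_4$), and then produce a perturbed geodesic $\gamma_2$ through the same three points by adding a small ``wiggle'' rotation on $K$ that vanishes at $\theta = 0, \pi/4, \pi/2$ but not elsewhere.

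Concretely, fix any unitary $U\colon H_1 \to H_2$ (possible since $\dim H_1 = \dim H_2$) and set
$$
R = \left[ \begin{matrix} {\frac{1}{2}I & \frac{1}{2}U^\ast & 0 & 0 \cr \frac{1}{2}U & \frac{1}{2}I & 0 & 0 \cr 0 & 0 & I & 0 \cr 0 & 0 & 0 & 0 \cr} \end{matrix} \right].
$$
Squaring $R - P$ yields $\frac{1}{2}I$ on $H_1 \oplus H_2$ and $0$ elsewhere, so $\|R - P\| = 1/\sqrt{2}$; the same computation gives $\|R - Q\| = 1/\sqrt{2}$. The straight geodesic
$$
\gamma_1(\theta) = \left[ \begin{matrix} {(\cos^2\theta) I & (\cos \theta \sin \theta) U^\ast & 0 & 0 \cr (\cos \theta \sin \theta) U & (\sin^2 \theta) I & 0 & 0 \cr 0 & 0 & I & 0 \cr 0 & 0 & 0 & 0 \cr} \end{matrix} \right]
$$
passes through $P, R, Q$ at $\theta = 0, \pi/4, \pi/2$, and a $2 \times 2$ trigonometric reduction (as in Lemma \ref{and}) shows $\|\gamma_1(\theta_1) - \gamma_1(\theta_2)\| = \sin|\theta_1 - \theta_2|$.

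For the second path, pick any continuous $1$-Lipschitz function $\beta\colon [0, \pi/2] \to \mathbb{R}$ with $\beta(0) = \beta(\pi/4) = \beta(\pi/2) = 0$ and $\beta \not\equiv 0$; for example, $\beta(\theta) = \frac{1}{4} \sin(4 \theta)$. Splitting $H_3 = \linspan\{e_3\} \oplus H_3'$ and $H_4 = \linspan\{e_4\} \oplus H_4'$, I refine the decomposition to $H = H_1 \oplus H_2 \oplus \linspan\{e_3\} \oplus \linspan\{e_4\} \oplus H_3' \oplus H_4'$, and define $\gamma_2(\theta)$ to equal $\gamma_1(\theta)$ on $H_1 \oplus H_2$ and on $H_3' \oplus H_4'$, while on $K = \linspan\{e_3, e_4\}$ it equals the rank-one projection onto $\cos \beta(\theta)\, e_3 + \sin \beta(\theta)\, e_4$. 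The rank and corank of $\gamma_2(\theta)$ match those of $P$, so $\gamma_2(\theta) \in P_\infty(H)$, and $\beta(0) = \beta(\pi/4) = \beta(\pi/2) = 0$ gives $\gamma_2(0) = P$, $\gamma_2(\pi/4) = R$, $\gamma_2(\pi/2) = Q$.

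The isometry condition then follows from a block-diagonal analysis: with respect to the refined decomposition, $\gamma_2(\theta_1) - \gamma_2(\theta_2)$ is block-diagonal, and its three non-zero blocks have norms $|\sin(\theta_1 - \theta_2)|$ (on $H_1 \oplus H_2$), $|\sin(\beta(\theta_1) - \beta(\theta_2))|$ (on $K$, by the same trigonometric reduction) and $0$ (on $H_3' \oplus H_4'$). Since $\beta$ is $1$-Lipschitz and all the values $|\theta_1 - \theta_2|, |\beta(\theta_1) - \beta(\theta_2)|$ lie in $[0, \pi/2]$, monotonicity of $\sin$ forces $|\sin(\beta(\theta_1) - \beta(\theta_2))| \le |\sin(\theta_1 - \theta_2)|$, so the overall norm is exactly $|\sin(\theta_1 - \theta_2)|$. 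Finally, $\beta \not\equiv 0$ guarantees $\gamma_1 \neq \gamma_2$. The only delicate point is recognising that the $1$-Lipschitz condition on $\beta$ is exactly the constraint needed to keep the $K$-block from dominating the maximum of block norms; once this is in place, the construction is transparent.
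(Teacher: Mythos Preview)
Your proof is correct and follows essentially the same approach as the paper: both pick one-dimensional pieces of $H_3$ and $H_4$, keep the standard rotation on $H_1\oplus H_2$, and produce the extra freedom via a $1$-Lipschitz ``wiggle'' function vanishing at $0,\pi/4,\pi/2$ acting on the resulting two-dimensional subspace. The only cosmetic differences are that the paper identifies $H_1$ with $H_2$ rather than carrying the unitary $U$, and parametrizes the whole family of geodesics by $f$ rather than exhibiting two specific ones.
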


\begin{proof}
Since $\dim H_1 = \dim H_2$ we can identify these two subspaces.
Because of $\dim H_3 \not=0$ and $\dim H_4 \not=0$ we can decompose both $H_3$ and $H_4$ into direct sums of a one-dimensional subspace and its orthogonal
complement and then the corresponding matrix representations of $P$ and $Q$ are
$$
P = \left[ \begin{matrix} {I & 0 & 0  & 0 & 0 & 0 \cr 0&  0 & 0 & 0 & 0 & 0 \cr 0 & 0 & 1 & 0 & 0 & 0 \cr  0&  0 & 0 & 0 & 0 & 0 \cr  0&  0 & 0 & 0 & I & 0 \cr  0&  0 & 0 & 0 & 0 & 0 \cr} \end{matrix} \right] \ \ \ {\rm and} \ \ \
Q = \left[ \begin{matrix} {0 & 0 & 0 & 0 & 0 & 0 \cr 0 &   I & 0 & 0 & 0 & 0   \cr  0 & 0 & 1 & 0 & 0 & 0 \cr  0&  0 & 0 & 0 &0 &0\cr  0&  0 & 0 & 0 & I & 0 \cr  0&  0 & 0 & 0 & 0 & 0 \cr} \end{matrix} \right]
$$
with the third and the fourth subspace in the corresponding orthogonal direct sum decomposition of $H$ being one-dimensional.

Choosing
$$
R =  \left[ \begin{matrix} {{1 \over 2}I & {1 \over 2}I & 0  & 0 & 0 & 0 \cr {1 \over 2}I 
&  {1 \over 2}I & 0 & 0 & 0 & 0 \cr 0 & 0 & 1 & 0 & 0 & 0 \cr  0&  0 & 0 & 0 & 0 & 0 \cr  0&  0 & 0 & 0 & I & 0 \cr  0&  0 & 0 & 0 & 0 & 0 \cr} \end{matrix} \right]
$$
we immediately see that for every Lipschitz function
$f\colon [0, {\pi \over 2}] \to [0, {\pi \over 2}]$ with Lipschitz constant $1$ satisfying $f(0) = f({\pi \over 4}) = f({\pi \over 2}) = 0$
the
 mapping $\gamma\colon [ 0, {\pi \over 2} ] \to P_\infty (H)$ given by
$$
\gamma (\theta ) =$$ $$= 
 \left[ \begin{matrix} {(\cos^2 \theta)I & (\cos \theta \sin\theta)I & 0  & 0 & 0 & 0 \cr (\cos\theta \sin\theta) I 
&  (\sin^2 \theta) I & 0 & 0 & 0 & 0 \cr 0 & 0 & \cos^2 f(\theta)  & \cos f(\theta) \sin f(\theta) & 0 & 0 \cr  0&  0 & \cos f(\theta) \sin f(\theta) & \sin^2 f(\theta) & 0 & 0 \cr  0&  0 & 0 & 0 & I & 0 \cr  0&  0 & 0 & 0 & 0 & 0 \cr} \end{matrix} \right]
$$
has the desired properties.
\end{proof}

\begin{lemma}\label{pasedr}
Assume that $P,Q \in P_\infty (H)$ have matrix representations
$$
P = \left[ \begin{matrix} { I & 0 & 0 & 0 & 0 & 0 \cr  0 & 0 & 0 & 0 & 0 & 0 \cr  0 & 0 & I & 0 & 0 & 0 \cr  0 & 0 & 0 & 0 & 0 & 0 \cr  0 & 0 & 0 & 0 & I & 0 \cr  0 & 0 & 0 & 0 & 0 & 0 \cr} \end{matrix} \right]
\ \ \ {\rm\it and} \ \ \ 
Q = \left[ \begin{matrix} { 0 & 0 & 0 & 0 & 0 & 0 \cr  0 & I & 0 & 0 & 0 & 0 \cr  0 & 0 & I & 0 & 0 & 0 \cr  0 & 0 & 0 & 0 & 0 & 0 \cr  0 & 0 & 0 & 0 & C^2 & SC \cr  0 & 0 & 0 & 0 & SC & S^2 \cr} \end{matrix} \right],
$$ 
with respect to the orthogonal direct sum decomposition
 $H= L \oplus L \oplus H_1 \oplus H_2 \oplus K \oplus K$ (note that the first two and the last two summands are equal).
Assume further that $L, K \not=\{0\}$ (while any of $H_1$ and $H_2$ might be a zero subspace and then, of course, some columns and rows in the above matrix representations of $P$ and $Q$
are absent) and that
$S, C \colon K \to K$ are self-adjoint injective operators satisfying $0 \le S,C \le I$ and $S^2 + C^2 = I$.
Suppose finally that $\| S \| = \sin\psi < 1$.

Then there exists
$R \in P_\infty (H)$ such that
$$
\| R - P \| = \| R - Q \| = {1 \over \sqrt{2}}
$$
and there exist more than just one mapping $\gamma \colon [ 0, {\pi \over 2} ] \to P_\infty (H)$ with the properties
$$
\gamma (0) = P, \ \ \ \gamma \left({\pi \over 2} \right) = Q, \ \ \ {\rm\it and} \ \ \  \gamma \left({\pi \over 4} \right) = R,
$$
and
$$
\| \gamma (\theta_1 ) - \gamma (\theta_2) \| = \sin | \theta_1 - \theta_2 |
$$
for all $\theta_1 , \theta_2 \in  [ 0, {\pi \over 2} ]$.
\end{lemma}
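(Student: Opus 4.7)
The plan is to exploit the fact that $\|S\| = \sin\psi < 1$ to create extra flexibility in the $K \oplus K$ block, combined with the standard unit-speed geodesic in the $L \oplus L$ block, and use the block-diagonal structure (so that norms are maxima over blocks) to absorb any perturbation in the $K \oplus K$ block without affecting the overall distance $\sin|\theta_1 - \theta_2|$.

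First I would define the candidate midpoint
$$
R = \left[\begin{matrix}
\tfrac12 I & \tfrac12 I & 0 & 0 & 0 & 0 \cr
\tfrac12 I & \tfrac12 I & 0 & 0 & 0 & 0 \cr
0 & 0 & I & 0 & 0 & 0 \cr
0 & 0 & 0 & 0 & 0 & 0 \cr
0 & 0 & 0 & 0 & C_{\pi/4}^2 & C_{\pi/4}S_{\pi/4} \cr
0 & 0 & 0 & 0 & C_{\pi/4}S_{\pi/4} & S_{\pi/4}^2
\end{matrix}\right],
$$
where $C_{\pi/4} = \cos(\tfrac12 \arcsin S)$ and $S_{\pi/4} = \sin(\tfrac12 \arcsin S)$, i.e.\ the midpoint of the Halmos geodesic $\alpha$ on $K\oplus K$ from Lemma \ref{slomk}. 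Since $R - P$ and $R - Q$ are block-diagonal with respect to the decomposition $(L\oplus L)\oplus H_1 \oplus H_2 \oplus (K\oplus K)$, the norm is the maximum of the block norms. The $L\oplus L$ block contributes $\tfrac1{\sqrt 2}$ (an elementary $2\times 2$ self-adjoint computation), the middle blocks contribute $0$, and by Lemma \ref{slomk} the $K\oplus K$ block contributes $\sin(\psi/2) < \sin(\pi/4) = \tfrac{1}{\sqrt 2}$, since $\psi < \pi/2$. Hence $\|R-P\| = \|R-Q\| = \tfrac{1}{\sqrt 2}$.

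Next I would construct the family of geodesics. For any Lipschitz function $f:[0,\pi/2] \to [0,\pi/2]$ satisfying $f(0) = 0$, $f(\pi/4) = \pi/4$, $f(\pi/2) = \pi/2$, and $\mathrm{Lip}(f) \le \pi/(2\psi)$, define
$$
\gamma_f(\theta) = \left[\begin{matrix}
(\cos^2\theta) I & (\cos\theta \sin\theta) I & 0 & 0 & 0 & 0 \cr
(\cos\theta\sin\theta) I & (\sin^2\theta) I & 0 & 0 & 0 & 0 \cr
0 & 0 & I & 0 & 0 & 0 \cr
0 & 0 & 0 & 0 & 0 & 0 \cr
0 & 0 & 0 & 0 & \alpha(f(\theta))_{11} & \alpha(f(\theta))_{12} \cr
0 & 0 & 0 & 0 & \alpha(f(\theta))_{21} & \alpha(f(\theta))_{22}
\end{matrix}\right].
$$
By block-diagonality, $\|\gamma_f(\theta_1) - \gamma_f(\theta_2)\|$ equals the maximum of $\sin|\theta_1-\theta_2|$ (on $L\oplus L$, by the standard Halmos computation, as in Lemma \ref{and}) and $\sin\bigl(\tfrac{2\psi}{\pi}|f(\theta_1)-f(\theta_2)|\bigr)$ (on $K\oplus K$, by Lemma \ref{slomk}). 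The Lipschitz bound $\mathrm{Lip}(f) \le \pi/(2\psi)$ guarantees that the second term is dominated by the first, so $\|\gamma_f(\theta_1)-\gamma_f(\theta_2)\| = \sin|\theta_1-\theta_2|$. The boundary conditions on $f$ also give $\gamma_f(0) = P$, $\gamma_f(\pi/4) = R$, $\gamma_f(\pi/2) = Q$, and each $\gamma_f(\theta)$ is a projection of the correct infinite rank and corank (since projections and their ranks are preserved along continuous curves in $P(H)$).

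Finally I would observe that the class of admissible $f$'s is genuinely larger than $\{\mathrm{id}\}$, because the Lipschitz constant $\pi/(2\psi)$ is strictly greater than $1$. For instance, $f_\epsilon(\theta) = \theta + \epsilon \sin(4\theta)$ satisfies $f_\epsilon(0) = f_\epsilon(\pi/4) = f_\epsilon(\pi/2) - \pi/2 = 0$ (note $\sin(\pi) = \sin(0) = \sin(2\pi) = 0$) and has Lipschitz constant $1 + 4\epsilon < \pi/(2\psi)$ for $\epsilon$ small enough, giving a geodesic $\gamma_{f_\epsilon} \ne \gamma_{\mathrm{id}}$. The main (minor) obstacle is just the careful block-norm bookkeeping to verify that the $L \oplus L$ block strictly dominates, but once one commits to identifying the two copies of $L$ and to the standard midpoint $R_L$, this follows from the routine $2\times 2$ self-adjoint computation and the estimate $\sin(\psi/2) < \tfrac{1}{\sqrt 2}$.
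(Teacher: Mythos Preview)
Your argument is correct and follows essentially the same approach as the paper's own proof: the same midpoint $R$, the same family $\gamma_f$ built from Lipschitz reparametrizations $f$ of the $K\oplus K$ block with $\mathrm{Lip}(f)\le \pi/(2\psi)$, and the same appeal to Lemma~\ref{slomk} to bound the $K\oplus K$ contribution by the $L\oplus L$ one. You have simply spelled out the block-norm verifications and supplied an explicit nontrivial $f$, which the paper leaves implicit.
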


\begin{proof}
We set
$$
C_\theta = \cos \left( {2 \theta \over \pi} \arcsin S \right) \ \ {\rm and} \ \  S_\theta = \sin \left( {2 \theta \over \pi} \arcsin S \right), \ \ \theta \in \left[ 0, {\pi \over 2} \right].
$$
Obviously,  there are infinitely many  Lipschitz functions
$f \colon [0, {\pi \over 2}] \to [0, {\pi \over 2}]$ with Lipschitz constant ${\pi \over 2 \psi}$ satisfying $f(0) = 0$, $ f({\pi \over 4}) = {\pi \over 4}$ and $f({\pi \over 2}) = {\pi \over 2}$.
Choose
$$
R =  \left[ \begin{matrix} { {1 \over 2}I &  {1 \over 2}I & 0 & 0 & 0 & 0 \cr   {1 \over 2}I &  {1 \over 2}I & 0 & 0 & 0 & 0 \cr  0 & 0 & I & 0 & 0 & 0 \cr  0 & 0 & 0 & 0 & 0 & 0 \cr  
0 & 0 & 0 & 0 & C_{\pi \over 4}^2 & C_{\pi \over 4} S_{\pi \over 4} \cr  0 & 0 & 0 & 0 & C_{\pi \over 4} S_{\pi \over 4} & S_{\pi \over 4}^2 \cr} \end{matrix} \right]
$$
and the
 mapping $\gamma \colon [ 0, {\pi \over 2} ] \to P_\infty (H)$ given by
$$
\gamma (\theta ) =
\left[ \begin{matrix} { (\cos^2 \theta) I &  (\cos \theta \sin \theta)I & 0 & 0 & 0 & 0 \cr  (\cos \theta \sin \theta)  I &   (\sin^2 \theta)I & 0 & 0 & 0 & 0 \cr  0 & 0 & I & 0 & 0 & 0 \cr  0 & 0 & 0 & 0 & 0 & 0 \cr  
0 & 0 & 0 & 0 & C_{f(\theta)}^2 & C_{f(\theta)} S_{f(\theta)} \cr  0 & 0 & 0 & 0 & C_{f(\theta)} S_{f(\theta)} & S_{f(\theta)}^2 \cr} \end{matrix} \right] .
$$
A straightforward application of Lemma \ref{slomk} completes the proof.
\end{proof}

\begin{lemma}\label{pasetr}
Assume that $P,Q \in P_\infty (H)$ have matrix representations
$$
P = \left[ \begin{matrix} { I & 0 & 0 & 0 & 0 & 0 \cr  0 & 0 & 0 & 0 & 0 & 0 \cr  0 & 0 & I & 0 & 0 & 0 \cr  0 & 0 & 0 & 0 & 0 & 0 \cr  0 & 0 & 0 & 0 & I & 0 \cr  0 & 0 & 0 & 0 & 0 & 0 \cr} \end{matrix} \right]
\ \ \ {\rm\it and} \ \ \ 
Q = \left[ \begin{matrix} { 0 & 0 & 0 & 0 & 0 & 0 \cr  0 & I & 0 & 0 & 0 & 0 \cr  0 & 0 & I & 0 & 0 & 0 \cr  0 & 0 & 0 & 0 & 0 & 0 \cr  0 & 0 & 0 & 0 & C^2 & SC \cr  0 & 0 & 0 & 0 & SC & S^2 \cr} \end{matrix} \right],
$$ 
with respect to the orthogonal direct sum decomposition
 $H= L \oplus L \oplus H_1 \oplus H_2 \oplus K \oplus K$.
Assume further that $K \not=\{0\}$ and that
$S, C \colon K \to K$ are self-adjoint injective operators satisfying $0 \le S,C \le I$ and $S^2 + C^2 = I$.
Suppose finally that $\| S \| =  1$.

Then there exists
$R \in P_\infty (H)$ such that
$$
\| R - P \| = \| R - Q \| = {1 \over \sqrt{2}}
$$
and there exist more than just one mapping $\gamma \colon [ 0, {\pi \over 2} ] \to P_\infty (H)$ with the properties
$$
\gamma (0) = P, \ \ \ \gamma \left({\pi \over 2} \right) = Q, \ \ \ {\rm\it and} \ \ \  \gamma \left({\pi \over 4} \right) = R,
$$
and
$$
\| \gamma (\theta_1 ) - \gamma (\theta_2) \| = \sin | \theta_1 - \theta_2 |
$$
for all $\theta_1 , \theta_2 \in  [ 0, {\pi \over 2} ]$.
\end{lemma}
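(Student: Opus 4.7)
The strategy is parallel to that of Lemma \ref{pasedr}, but the reparametrization argument there fails because $\|S\|=1$ reduces the Lipschitz constant of the new parameter to exactly $1$, which together with the endpoint conditions $g(0)=0$, $g(\pi/4)=\pi/4$, $g(\pi/2)=\pi/2$ forces $g$ to be the identity. To recover the freedom, the plan is to decompose the Halmos block on $K\oplus K$ spectrally into one piece on which $\|S\|$ is still $1$ (driving the geodesic at full speed) and one piece on which $\|S\|<1$ (providing Lipschitz slack for a variable reparametrization). The two structural facts that make this possible are: injectivity of $C$ is exactly the condition $1\notin\sigma_p(S)$, so by self-adjointness $1$ is not isolated in $\sigma(S)$; and $\sigma(S)\neq\{1\}$ (otherwise $S=I$ and $C=0$), so the spectrum also contains values strictly below $1$.

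Concretely, I would pick $\lambda_0\in\sigma(S)\cap[0,1)$, set $\epsilon=(1-\lambda_0)/2$, and define $K^{(1)}=E((1-\epsilon,1])K$ and $K^{(2)}=E([0,1-\epsilon])K$, where $E$ is the spectral measure of $S$. Both subspaces are nonzero, mutually orthogonal, and invariant under $S$ and $C=\sqrt{I-S^2}$. The restricted pairs $S_i=S|_{K^{(i)}}$, $C_i=C|_{K^{(i)}}$ remain self-adjoint and injective with $S_i^2+C_i^2=I$, while $\|S_1\|=1$ and $\|S_2\|\le 1-\epsilon<1$. This splits $K\oplus K=(K^{(1)}\oplus K^{(1)})\oplus(K^{(2)}\oplus K^{(2)})$, and the restrictions of $P$ and $Q$ become Halmos pairs on each summand with parameters $\psi_1=\pi/2$ and $\psi_2<\pi/2$. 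Let $\alpha_i\colon[0,\pi/2]\to P(K^{(i)}\oplus K^{(i)})$ be the corresponding maps from Lemma \ref{slomk}, and let $\beta\colon[0,\pi/2]\to P(L\oplus L)$ be the standard orthogonal-pair geodesic going from $I_L\oplus 0_L$ to $0_L\oplus I_L$ (absent if $L=\{0\}$).

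Put
$$R=\beta(\pi/4)\oplus I_{H_1}\oplus 0_{H_2}\oplus\alpha_1(\pi/4)\oplus\alpha_2(\pi/4),$$
and, for each Lipschitz function $g\colon[0,\pi/2]\to[0,\pi/2]$ with Lipschitz constant $\pi/(2\psi_2)$ satisfying $g(0)=0$, $g(\pi/4)=\pi/4$, $g(\pi/2)=\pi/2$, define
$$\gamma_g(\theta)=\beta(\theta)\oplus I_{H_1}\oplus 0_{H_2}\oplus\alpha_1(\theta)\oplus\alpha_2(g(\theta)).$$
The boundary and midpoint identities $\gamma_g(0)=P$, $\gamma_g(\pi/2)=Q$, $\gamma_g(\pi/4)=R$ are immediate. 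Because the operator norm on an orthogonal direct sum is the maximum over blocks, Lemma \ref{slomk} applied to $\alpha_1$ (with $\psi_1=\pi/2$) and $\alpha_2$ (with $\psi_2$) together with the Lipschitz bound on $g$ give $\|\gamma_g(\theta_1)-\gamma_g(\theta_2)\|=\sin|\theta_1-\theta_2|$, so in particular $\|R-P\|=\|R-Q\|=1/\sqrt 2$ for every admissible $g$. Since $\pi/(2\psi_2)>1$, the admissible $g$ form an infinite family---for instance $g(\theta)=\theta$ and $g(\theta)=\theta+\delta\sin(4\theta)$ for all sufficiently small $\delta>0$ are both admissible---producing infinitely many distinct mappings $\gamma_g$ sharing the common midpoint $R$.

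The only genuinely delicate ingredient is the spectral decomposition at the outset: one must show that $\sigma(S)$ meets both every small neighbourhood of $1$ and the interval $[0,1)$. Both follow from a two-fold use of the injectivity of $C$, namely $E(\{1\})=0$ prevents $1$ from being isolated in $\sigma(S)$, and $C\neq 0$ prevents $\sigma(S)$ from collapsing to $\{1\}$. Once the decomposition is in place, the remainder is a routine assembly based on Lemma \ref{slomk} and the maximum-over-blocks behaviour of the operator norm on direct sums.
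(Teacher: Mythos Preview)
Your proposal is correct and follows essentially the same approach as the paper's own proof: the paper also observes that injectivity of $S$ and $C$ together with $\|S\|=1$ force $K$ to be infinite-dimensional, then invokes the spectral theorem to split the $K\oplus K$ block into two Halmos pieces with $\|S_1\|=1$ and $0<\|S_2\|<1$, and finally appeals to the construction of Lemma~\ref{pasedr}. Your write-up simply supplies the details the paper leaves implicit---the explicit choice of spectral cut, the verification that both pieces are nonzero and that the restrictions remain injective, and the explicit family of reparametrizations $g$---so there is nothing materially different in strategy.
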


\begin{proof}
Note that $\| S \| = 1$, $S^2 + C^2 =I$, and the injectivity of both $S$ and $C$ impy that $K$ is infinite-dimensional. By the spectral theorem the bottom-right corners of $P$ and $Q$:
$$
\left[ \begin{matrix} { I & 0 \cr 0 & 0 \cr} \end{matrix} \right] \ \ \ {\rm and} \ \ \ \left[ \begin{matrix} { C^2 & CS \cr CS & S^2 \cr} \end{matrix} \right] 
$$
can be rewritten as
$$
\left[ \begin{matrix} { I & 0 & 0 & 0\cr 0 & 0 & 0 & 0 \cr  0 & 0 & I & 0\cr 0 & 0 & 0 & 0 \cr} \end{matrix} \right] \ \ \ {\rm and} \ \ \ \
\left[ \begin{matrix} { C_{1}^2 & C_1 S_1 & 0 & 0 \cr C_1 S_1 & S_{1} ^2 & 0 & 0 \cr 0 & 0 &  C_{2}^2 & C_2 S_2  \cr 0 & 0& C_2 S_2 & S_{2} ^2  \cr} \end{matrix} \right] 
$$
with $\| S_1 \| = 1$ and $0 < \| S_2 \| < 1$.
One can now use the same ideas as before to complete the proof.
\end{proof}

For $P,Q \in P_\infty (H)$ we write $P \sim Q$ if and only if
$P\perp Q$ or $ (I-P) \perp (I-Q)$, and $P \sharp Q$ if and only if $P \perp Q$ and $P +Q \in P_\infty (H)$,
that is, we have $P \sharp Q$ if and only if $P$ and $Q$ are orthogonal and the kernel of $P+Q$ is infinite-dimensional, or in other words, $P+Q$ have up to a unitary similarity matrix
representations
$$
P = \left[ \begin{matrix} {I & 0 & 0 \cr 0 & 0 & 0 \cr 0 & 0 & 0 \cr} \end{matrix} \right] \ \ \ {\rm and} \ \ \
Q = \left[ \begin{matrix} {0 & 0 & 0 \cr 0 & I & 0 \cr 0 & 0 & 0 \cr} \end{matrix} \right],
$$  
where the three direct summands in the corresponding orthogonal direct sum decomposition of $H$ are all infinite-dimensional.
Note that above we could define $P \sharp Q$  if and only if $P +Q \in P_\infty (H)$, since
$P+Q \in P_\infty(H)$ yields $P\perp Q$.

\begin{lemma}\label{minneap}
Let $P,Q \in P_\infty (H)$. Then there exist $j \in \{ 1,2,3\}$ and a  sequence $P=P_0, P_1, \ldots, P_j =Q$ such
that $P_k \in P_\infty (H)$, $k=0,1,\ldots , j$, and $P_{k-1} \perp P_k$, $k=1, \ldots, j$.
\end{lemma}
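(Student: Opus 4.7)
The strategy is to apply Halmos' two projections theorem (Theorem \ref{twop}) and then exhibit at most two intermediate projections explicitly. First, I would write $H = H_1 \oplus H_2 \oplus H_3 \oplus H_4 \oplus (K \oplus K)$ with $P$, $Q$ in the standard block form of that theorem; a direct inspection of the matrix representations yields
$$
\Ker P = H_2 \oplus H_4 \oplus (\{0\} \oplus K) \ \ \ {\rm and} \ \ \ \Ker Q = H_1 \oplus H_4 \oplus \{(Sf, -Cf) \colon f \in K\}.
$$

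The key reduction is that in order to produce a sequence with $j = 3$ it suffices to find infinite-dimensional subspaces $V_1 \subseteq \Ker P$ and $V_2 \subseteq \Ker Q$ with $V_1 \perp V_2$: setting $P_1, P_2$ to be the orthogonal projections onto $V_1, V_2$, the sequence $P, P_1, P_2, Q$ has consecutive entries orthogonal, and each $P_k$ belongs to $P_\infty (H)$ (infinite rank by construction; infinite corank since $V_k^\perp$ contains the infinite-dimensional $\Image P$ or $\Image Q$). Two degenerate cases allow us to shorten the sequence: if $P \perp Q$ we take $j = 1$; if $\dim H_4 = \infty$, letting $P_1$ be the projection onto any infinite-dimensional subspace of $H_4 \subseteq \Ker P \cap \Ker Q$ yields $j = 2$.

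So I may assume $P \not\perp Q$ and $\dim H_4 < \infty$. If either $\dim H_1 = \infty$ or $\dim H_2 = \infty$, the construction of $V_1, V_2$ is immediate from the Halmos decomposition: for instance if $\dim H_2 = \infty$, take $V_1$ to be any infinite-dimensional subspace of $H_2$ and $V_2 = \Ker Q$, since $H_2$ is orthogonal to each of the summands $H_1$, $H_4$, and $\{(Sf, -Cf) \colon f \in K\}$ of $\Ker Q$. The remaining case has $\dim H_1$, $\dim H_2$, $\dim H_4$ all finite, in which case the rank and corank constraints on $P$ and $Q$ force $\dim K = \infty$. Then one works entirely inside $K \oplus K$: I seek infinite-dimensional $K_1, K_2 \subseteq K$ with $K_1 \perp CK_2$, after which the choices $V_1 = \{0\} \oplus K_1$ and $V_2 = \{(Sf, -Cf) \colon f \in K_2\}$ complete the construction (here $\dim V_2 = \dim K_2$ because $C$ is injective, and $V_1 \perp V_2$ because $\langle (0, g), (Sf, -Cf) \rangle = -\langle g, Cf \rangle$).

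The main obstacle is producing such $K_1, K_2$, and for this I would invoke the spectral theorem for the self-adjoint operator $C$ on the infinite-dimensional separable Hilbert space $K$: there always exists a projection $E$ commuting with $C$ such that both $E(K)$ and $(I - E)(K)$ are infinite-dimensional. Indeed, if $C$ is a scalar multiple of the identity then any orthogonal projection with infinite-dimensional range and kernel works; otherwise one uses a spectral projection $E_C(B)$ for some Borel $B \subseteq \sigma(C)$, possibly refining the partition of $\sigma(C)$ (for example, if the spectral measure is purely atomic with only finite-rank atoms, then there must be countably many atoms, which one splits into two infinite subsets). Taking $K_2 = E(K)$, which is $C$-invariant since $E$ commutes with $C$, and $K_1 = K_2^\perp$, one obtains $CK_2 \subseteq K_2 \perp K_1$, as required.
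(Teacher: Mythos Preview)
Your proof is correct and follows essentially the same route as the paper's: both start from the Halmos decomposition, dispose of the case $\dim H_4=\infty$ with $j=2$, handle the case where $H_1$ or $H_2$ is infinite-dimensional by an easy direct construction, and in the remaining case (forcing $\dim K=\infty$) split $K$ into two infinite-dimensional $C$-reducing pieces via the spectral theorem, taking $P_1$ to project onto $\{0\}\oplus K_1$ and $P_2$ onto $\{(Sf,-Cf):f\in K_2\}$. Your unifying reduction to finding infinite-dimensional $V_1\subseteq\Ker P$, $V_2\subseteq\Ker Q$ with $V_1\perp V_2$ is a clean way to organize the casework, and your discussion of why a commuting projection $E$ with infinite-dimensional image and kernel exists fills in a detail the paper leaves implicit; otherwise the two arguments coincide.
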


\begin{proof}
By the two projections theorem we have up to a unitary similarity matrix representations
$$
P = \left[ \begin{matrix} { I & 0 & 0 & 0 & 0 & 0 \cr  0 & 0 & 0 & 0 & 0 & 0 \cr  0 & 0 & I & 0 & 0 & 0 \cr  0 & 0 & 0 & 0 & 0 & 0 \cr  0 & 0 & 0 & 0 & I & 0 \cr  0 & 0 & 0 & 0 & 0 & 0 \cr} \end{matrix} \right]
\ \ \ {\rm and} \ \ \ 
Q = \left[ \begin{matrix} { 0 & 0 & 0 & 0 & 0 & 0 \cr  0 & I & 0 & 0 & 0 & 0 \cr  0 & 0 & I & 0 & 0 & 0 \cr  0 & 0 & 0 & 0 & 0 & 0 \cr  0 & 0 & 0 & 0 & C^2 & SC \cr  0 & 0 & 0 & 0 & SC & S^2 \cr} \end{matrix} \right],
$$ 
where the underlying orthogonal decomposition of $H$ is $H= H_1 \oplus H_2 \oplus H_3 \oplus H_4 \oplus K \oplus K$ and
$S, C \colon K \to K$ are self-adjoint injective operators satisfying $0 \le S,C \le I$ and $S^2 + C^2 = I$.

In the case when $K$ is infinite-dimensional
the bottom-right corners of $P$ and $Q$:
$$
\left[ \begin{matrix} { I & 0 \cr 0 & 0 \cr} \end{matrix} \right] \ \ \ {\rm and} \ \ \ \left[ \begin{matrix} { C^2 & CS \cr CS & S^2 \cr} \end{matrix} \right] 
\in B(K\oplus K)
$$
can be rewritten as
$$
\left[ \begin{matrix} { I & 0 & 0 & 0\cr 0 & 0 & 0 & 0 \cr  0 & 0 & I & 0\cr 0 & 0 & 0 & 0 \cr} \end{matrix} \right] \ \ \ {\rm and} \ \ \ \
\left[ \begin{matrix} { C_{1}^2 & C_1 S_1 & 0 & 0 \cr C_1 S_1 & S_{1} ^2 & 0 & 0 \cr 0 & 0 &  C_{2}^2 & C_2 S_2  \cr 0 & 0& C_2 S_2 & S_{2} ^2  \cr} \end{matrix} \right]
\in B(K\oplus K)
$$
where all the underlying direct summands are infinite-dimensional,
and then the projections $P_0=P$, $P_3 = Q$, $P_1$ whose matrix representation has bottom-right corner equal to
$$
\left[ \begin{matrix} { 0 & 0 & 0 & 0\cr 0 & I & 0 & 0 \cr  0 & 0 & 0 & 0\cr 0 & 0 & 0 & 0 \cr} \end{matrix} \right]
\in B(K\oplus K)
$$
and zeros elsewhere, and $P_2$ whose bottom-right corner equals
$$
\left[ \begin{matrix} { 0 & 0 & 0 & 0 \cr 0 & 0 & 0 & 0 \cr 0 & 0 &  S_{2}^2 & -C_2 S_2  \cr 0 & 0& -C_2 S_2 & C_{2} ^2  \cr} \end{matrix} \right] 
\in B(K\oplus K)
$$
while all other entries are zero, do the job.

In the case when $H_4$ is infinite-dimensional we simply choose $P_0=P$, $P_2 = Q$, and $P_1$ is a projection whose image is $H_4$ to complete the proof.

It remains to consider the case when both $K$ and $H_4$ are finite-dimensional. Then both $H_1$ and $H_2$ must be infinite-dimensional. This case is easy and is left to the reader.
\end{proof}

\begin{corollary}\label{zamuda}
Let $P,Q \in P_\infty (H)$. Then there exist $j \in \{ 1,2,3\}$ and a sequence $P=P_0, P_1, \ldots, P_j =Q$ such
that $P_k \in P_\infty (H)$, $k=0,1,\ldots , j$, and $P_{k-1} \sharp P_k$, $k=1, \ldots, j$.
\end{corollary}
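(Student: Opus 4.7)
The plan is to follow the case analysis from the proof of Lemma~\ref{minneap} and strengthen each $\perp$ to $\sharp$ by arranging that the kernel of each consecutive sum remains infinite-dimensional. I would first apply Halmos' two projections theorem to write
$$
H=H_1\oplus H_2\oplus H_3\oplus H_4\oplus K\oplus K
$$
in the canonical form, where the membership $P,Q\in P_\infty(H)$ gives several infinite-dimensionality constraints on combinations of $\dim H_i$ and $\dim K$. The analysis then splits into three subcases, depending on which of $K$ and $H_4$ is infinite-dimensional.

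If $\dim K=\infty$, I would show that the chain $P_0=P,P_1,P_2,P_3=Q$ already constructed in Lemma~\ref{minneap} satisfies $P_{k-1}\sharp P_k$ for each $k$. The crucial feature is the refined decomposition $K\oplus K=K_1\oplus K_2\oplus K_3\oplus K_4$ into four infinite-dimensional summands used in that proof: inspecting the explicit matrix representations of $P_0,P_1,P_2,P_3$, one sees that for each consecutive pair some full $K_i$ (or a piece of comparable size in $K_1\oplus K_2$) lies in the orthogonal complement of the sum of their images, so each sum has infinite-dimensional kernel.

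If $\dim K<\infty$ and $\dim H_4=\infty$, the naive choice $P_1=$ projection onto the whole of $H_4$ from Lemma~\ref{minneap} may fail to give $P+P_1\in P_\infty(H)$ (precisely when $\dim H_2$ and $\dim K$ are both finite). I would instead split $H_4=V\oplus W$ with both $V,W$ infinite-dimensional and take $P_1$ to be the projection onto $V$; then $W$ sits in the kernels of both $P+P_1$ and $P_1+Q$, producing the length-two $\sharp$-chain $P\sharp P_1\sharp Q$. If both $\dim K<\infty$ and $\dim H_4<\infty$, the infinite-dimensionality of ${\rm Ker}\,P$ and ${\rm Ker}\,Q$ forces $\dim H_2=\dim H_1=\infty$; I would then pick infinite-dimensional subspaces $V_1\subsetneq H_2$ and $V_2\subsetneq H_1$, each with infinite-dimensional complement in the containing $H_i$, let $P_1,P_2$ be the projections onto $V_1,V_2$, and verify the length-three $\sharp$-chain $P\sharp P_1\sharp P_2\sharp Q$. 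The orthogonalities are immediate from $H_1\perp H_2$ and $H_1,H_2\perp {\rm Im}\,P,{\rm Im}\,Q$ (in the appropriate positions), while the infinite-dimensional kernels of the three sums come from the reserved pieces $H_2\ominus V_1$ or $H_1\ominus V_2$.

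The main obstacle I anticipate is the second case: one must recognise that the simplest possible choice of $P_1$ fails for dimensional reasons, and replace it by a projection onto a deliberately shrunk subspace of $H_4$ to leave an infinite-dimensional piece of $H_4$ available in the common kernel. The other two cases amount to bookkeeping once the Halmos decomposition is in place.
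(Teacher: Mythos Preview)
Your plan is correct, and all three cases go through as you describe. However, the paper's proof takes a different and considerably shorter route: rather than reopening the case analysis of Lemma~\ref{minneap}, it uses that lemma as a black box to obtain a $\perp$-chain $P=Q_0,Q_1,\ldots,Q_j=Q$ with $j\in\{1,2,3\}$, artificially lengthens it to $j\ge 2$ if necessary (by repeating $P,Q,P,Q$ when $j=1$), and then simply replaces each \emph{intermediate} projection $Q_k$ by some $P_k\le Q_k$ whose image and co-image inside ${\rm Im}\,Q_k$ are both infinite-dimensional. Orthogonality is inherited from $Q_{k-1}\perp Q_k$, and for each consecutive pair at least one index is intermediate, so the infinite-dimensional subspace ${\rm Im}(Q_k-P_k)$ sits in the kernel of the sum, giving $\sharp$ for free.

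The advantage of the paper's approach is that it is uniform and avoids touching the Halmos decomposition or the spectral splitting of $K$ again; your approach, by contrast, is more hands-on and requires verifying several kernel computations in the $\dim K=\infty$ case (in particular, for the pair $P_2,\,Q$ the infinite-dimensional kernel piece comes from the kernel of the $Q$-block on $K_1\oplus K_1$, which you do acknowledge in your parenthetical). Both arguments are valid; yours is more explicit, the paper's is slicker.
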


\begin{proof}
By the previous lemma we can find a sequence  $P=Q_0, Q_1, \ldots, Q_j =Q$ such
that $Q_k \in P_\infty (H)$, $k=0,1,\ldots , j$, and $Q_{k-1} \perp Q_k$, $k=1, \ldots, j$.
There is no loss of generality in assuming that
$j > 1$. Indeed, in the case when $j=1$ we have $P \perp Q$
and we can choose the sequence $Q_0 = P$, $Q_1 = Q$,
$Q_2 = P$, and $Q_3 = Q$. Thus, we may assume that $j \in \{ 2,3 \}$
and all we need to do is to replace $Q_1, \ldots, Q_{j-1}$ by projections 
 $P_1, \ldots, P_{j-1}$ satisfying $P_k \le Q_k$, $k=1, \ldots, j-1$, and the image of $Q_k - P_k$ is
infinite-dimensional.
\end{proof}

In the next few lemmas we will always assume that $\phi \colon P_\infty (H) \to P_\infty (H)$ is a bijective map preserving the relation $\sim$ in both directions, that is,
$$
P \sim Q \iff \phi (P) \sim \phi (Q) , \ \ \ P,Q \in P_\infty (H).
$$

\begin{lemma}\label{organi}
Let $P_j \in P_\infty (H)$, $j=1,2,3$, and assume that $P_k \sharp P_m$ whenever $k\not=m$. Then either
 $\phi (P_k) \sharp \phi (P_m )$ whenever $k\not=m$, or 
 $(I - \phi (P_k))  \sharp (I- \phi (P_m ))$ whenever $k\not=m$.
\end{lemma}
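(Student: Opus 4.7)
The plan is to prove Lemma \ref{organi} directly from the hypothesis that $\phi$ preserves $\sim$, without invoking any of the preceding geodesic machinery. Set $Q_j = \phi(P_j)$ for $j=1,2,3$. Since $P_k \sharp P_m$ implies $P_k \perp P_m$ and hence $P_k \sim P_m$, the $\sim$-preservation yields, for every pair $\{k,m\} \subseteq \{1,2,3\}$, at least one of the two alternatives
\[
Q_k \perp Q_m \quad \text{(call this type A)} \qquad \text{or} \qquad (I-Q_k) \perp (I-Q_m) \quad \text{(type B)}.
\]
I would then argue in two steps: first, that the choice of A versus B cannot be genuinely mixed across the three pairs; and second, that once all three pairs are simultaneously type A (respectively type B), the $\sharp$-relation is automatic because the third projection supplies an infinite-dimensional subspace inside the common kernel. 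The main obstacle, and the only nontrivial part, is the first step.

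For the consistency step I would argue by contradiction. Suppose some pair is only of type A while some other pair is only of type B. Since any two of the three pairs on three indices share a vertex, after relabeling I may take the type-A pair to be $\{1,2\}$ and the type-B pair to be $\{1,3\}$. From $Q_1 Q_2 = 0$ and $(I-Q_1)(I-Q_3)=0$ one reads off
\[
\Image Q_2 \subseteq \Ker Q_1 \subseteq \Image Q_3,
\]
so $Q_2 \le Q_3$. But then the pair $\{2,3\}$ must itself be of type A or of type B: the former gives $Q_2 = Q_2 Q_3 = 0$, contradicting $Q_2 \in P_\infty(H)$; the latter gives $\Ker Q_2 \subseteq \Image Q_3$, which combined with $\Image Q_2 \subseteq \Image Q_3$ forces $Q_3 = I$, again contradicting membership in $P_\infty(H)$. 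Hence every pair is of type A, or every pair is of type B.

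For the upgrade step, assume the type-A case; the three projections $Q_1, Q_2, Q_3$ are then pairwise orthogonal. For any pair $\{k,m\}$ with third index $\ell$, the subspace $\Image Q_\ell$ is infinite-dimensional (because $Q_\ell \in P_\infty(H)$) and is contained in $\Ker Q_k \cap \Ker Q_m = \Ker(Q_k+Q_m)$ by orthogonality, so $Q_k + Q_m \in P_\infty(H)$, i.e., $Q_k \sharp Q_m$. The type-B case is handled identically after replacing each $Q_j$ by $I - Q_j$, using that $P \sharp Q$ depends symmetrically on $P$ and $Q$.
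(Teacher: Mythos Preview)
Your proof is correct and follows essentially the same route as the paper's: both arguments use only the $\sim$-preservation hypothesis, both reduce the $\sharp$-conclusion to pairwise $\perp$ via the third projection, and both settle the consistency of the A/B types by an elementary image/kernel argument. The only cosmetic difference is that the paper first locates a vertex with two same-type incident edges (pigeonhole) and then forces the third edge to match, whereas you argue by contradiction from an adjacent A-only/B-only pair; your version also spells out the $\perp \Rightarrow \sharp$ upgrade that the paper leaves as an ``of course''.
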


\begin{proof}
Of course, we only need to verify that
 either
 $\phi (P_k) \perp \phi (P_m )$ whenever $k\not=m$, or 
 $(I - \phi (P_k))  \perp (I- \phi (P_m ))$ whenever $k\not=m$.

For each pair $k,m$, $k\not=m$, we have
 $\phi (P_k) \perp \phi (P_m )$   or 
 $(I - \phi (P_k))  \perp (I- \phi (P_m ))$. It follows that there exists
$r \in \{ 1,2,3 \}$ such that either
$\phi (P_r) \perp \phi (P_m )$ for both integers $m \not=r$, or
$(I- \phi (P_r)) \perp (I- \phi (P_m ))$ for both integers $m \not=r$.

We will consider only the first possibility and we will assume with no loss of generality that $r=1$.
Hence, we have
$$
\phi (P_1) \perp \phi (P_2) \ \ \ {\rm and} \ \ \ \phi (P_1 ) \perp \phi (P_3).
$$
Clearly, $I - \phi (P_2)$ and $I - \phi (P_3)$ cannot be orthogonal, as their images contain ${\rm Im}\, \phi(P_1)$, forcing $\phi (P_2)$ and $\phi (P_3)$ to be orthogonal.
\end{proof}

\begin{lemma}\label{lukna}
Let $P,Q,R \in P_\infty (H)$ satisfy $P \sharp Q$, $P \sharp R$, and $\phi (P) \sharp \phi (Q)$. Then 
$$
\phi (P) \sharp \phi (R).
$$
\end{lemma}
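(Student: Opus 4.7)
The strategy is to chain three applications of Lemma \ref{organi} along two auxiliary projections. Concretely, I aim to construct $X_1, X_2 \in P_\infty(H)$ so that each of the triples $\{P,Q,X_1\}$, $\{P,X_1,X_2\}$, and $\{P,X_2,R\}$ is pairwise $\sharp$. Three successive applications of Lemma \ref{organi} then transport the known relation $\phi(P)\sharp\phi(Q)$ first to $\phi(P)\sharp\phi(X_1)$, then to $\phi(P)\sharp\phi(X_2)$, and finally to the desired $\phi(P)\sharp\phi(R)$.

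To rule out the complement-alternative in Lemma \ref{organi} at each step, I would use the observation that $\phi(A)\sharp\phi(B)$ and $(I-\phi(A))\sharp(I-\phi(B))$ cannot simultaneously hold for any pair of projections $A,B\in P_\infty(H)$: together they would give $\phi(A)\phi(B)=0=(I-\phi(A))(I-\phi(B))$ and hence $\phi(A)+\phi(B)=I$, whose kernel is trivial, violating the requirement that $\phi(A)+\phi(B)\in P_\infty(H)$. Thus at each step the already-known $\sharp$-pair among the $\phi$-images forces the first alternative of Lemma \ref{organi}.

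The technical core is the construction of $X_1$ and $X_2$. Since ${\rm Im}\, Q,{\rm Im}\, R\subseteq{\rm Ker}\, P$, I would apply Halmos' two projections theorem (Theorem \ref{twop}) to $Q$ and $R$ restricted to ${\rm Ker}\, P$, obtaining an orthogonal decomposition ${\rm Ker}\, P = L_1\oplus L_2\oplus L_3\oplus L_4\oplus (M\oplus M)$ with $L_4={\rm Ker}\, P\cap{\rm Ker}\, Q\cap{\rm Ker}\, R$ and commuting injective self-adjoint operators $S,C$ on $M$ satisfying $S^2+C^2=I$. I would then take $X_1$ to be the projection onto an infinite-dimensional closed subspace $W\subseteq V_{PQ}:={\rm Ker}\, P\cap{\rm Ker}\, Q$ chosen so that $V_{PR}\cap W^\perp$ is also infinite-dimensional (with $V_{PR}:={\rm Ker}\, P\cap{\rm Ker}\, R$); then $X_2$ can be taken as the projection onto any suitable infinite-dimensional subspace of $V_{PR}\cap{\rm Ker}\, X_1$, with the usual kernel-size conditions to keep everything in $P_\infty(H)$.

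The main obstacle is the existence of a suitable $W$, and it splits by cases. If one of $L_1, L_2, L_4$ is infinite-dimensional, $W$ is produced directly inside $V_{PQ}$: when $L_1$ is infinite-dimensional any infinite-dimensional $W\subseteq V_{PQ}$ works, because $L_1\perp V_{PQ}$ forces $L_1\subseteq V_{PR}\cap W^\perp$; when $L_2$ is infinite-dimensional one takes $W\subseteq L_2$, and $L_2\perp V_{PR}$ gives $V_{PR}\subseteq W^\perp$; when $L_4$ is infinite-dimensional one takes $W\subseteq L_4$ of infinite codimension there. In the remaining, hardest, case all of $L_1, L_2, L_4$ are finite-dimensional, forcing $M$ to be infinite-dimensional since $V_{PQ}$ is; here I would invoke the spectral theorem for the injective positive contraction $C$ on $M$ to split $M=M'\oplus M''$ into two infinite-dimensional $C$-invariant orthogonal spectral subspaces. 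Taking $W$ equal to $M'$ sitting inside the second copy of $M$ in $M\oplus M$, the identity ${\rm Ker}\, R\cap(M\oplus M)=\{(Sv,-Cv):v\in M\}$ permits a direct computation showing $V_{PR}\cap W^\perp$ contains the infinite-dimensional subspace $\{(Sv,-Cv):v\in(\overline{CM'})^\perp\}$, since $CM'\subseteq M'$ has infinite codimension in $M$.
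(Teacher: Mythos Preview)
Your strategy is correct and coincides with the paper's: chain applications of Lemma~\ref{organi} along intermediate projections, using at each step the (correct) observation that $\phi(A)\sharp\phi(B)$ and $(I-\phi(A))\sharp(I-\phi(B))$ are mutually exclusive to force the first alternative. The paper's proof is considerably shorter, however, because it does not construct $X_1,X_2$ by hand. Instead it observes that $Q$ and $R$, having images in ${\rm Ker}\,P$, are given by projections $Q_1,R_1\in P_\infty({\rm Ker}\,P)$, and then simply invokes Lemma~\ref{minneap} inside ${\rm Ker}\,P$ to obtain a chain $Q_1=T_0,T_1,\ldots,T_j=R_1$ with $T_{k-1}\perp T_k$. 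Lifting each $T_k$ to $S_k=0\oplus T_k$ on $H={\rm Im}\,P\oplus{\rm Ker}\,P$ automatically gives $P\sharp S_k$ and $S_{k-1}\sharp S_k$ (the kernels all contain ${\rm Im}\,P$), so the entire case analysis you carry out---the Halmos decomposition of $Q_1,R_1$, the four cases on $\dim L_i$, and the spectral splitting of $M$ in the hardest case---is subsumed by one citation of Lemma~\ref{minneap}. Your construction works, but you are essentially reproving a special case of Lemma~\ref{minneap} from scratch; recognising that it applies directly inside ${\rm Ker}\,P$ saves all of that effort. A minor point: your identification ${\rm Ker}\,R\cap(M\oplus M)=\{(Sv,-Cv):v\in M\}$ should carry a closure, since $S$ and $C$ need not have closed range.
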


\begin{proof}
With respect to the orthogonal direct sum decomposition $H = {\rm Im}\, P \oplus {\rm Ker}\, P$ we have
$$
P = \left[ \begin{matrix} { I & 0 \cr 0 & 0 \cr} \end{matrix} \right], \ \ \ 
Q = \left[ \begin{matrix} { 0 & 0 \cr 0 & Q_1 \cr} \end{matrix} \right], \ \ \ {\rm and} \ \ \ 
R = \left[ \begin{matrix} { 0 & 0 \cr 0 & R_1 \cr} \end{matrix} \right] 
$$
with $Q_1 , R_1 \in P_\infty ({\rm Ker}\, P)$. According to Lemma \ref{minneap} we can find
$Q_1=T_0, T_1, \ldots, T_j =R_1$ such
that $T_k \in P_\infty ({\rm Ker}\, P)$, $k=0,1,\ldots , j$, and $T_{k-1} \perp T_k$, $k=1, \ldots, j$.
Set
$$
S_k =  \left[ \begin{matrix} { 0 & 0 \cr 0 & T_k \cr} \end{matrix} \right], \ \ \  k=0, 1, \ldots , j.
$$
Then $S_0 = Q$, $S_j = R$, and $S_{k-1} \sharp S_k$, $k=1, \ldots , j$, and $P \sharp S_k$, $k=0, \ldots , j$.

Using Lemma \ref{organi} for the triple $P, Q= S_0, S_1$ we see that $\phi (P) \sharp \phi (S_1)$. If $j=1$ we are done. Otherwise
we apply Lemma
\ref{organi} once more, this time for the triple $P, S_1, S_2$ to conclude that $\phi (P) \sharp \phi (S_2)$.
If $j=2$ we are done. Otherwise we need one more step to conclude the proof.
\end{proof}

For $P\in P_\infty (H)$ we denote $$
P^\sharp = \{ Q \in P_\infty (H) \colon \, P \sharp Q \}.
$$

\begin{lemma}\label{fupavre}
Assume that there exist $P, Q \in P_\infty (H)$ such that $P \sharp Q$ and $\phi (P) \sharp \phi (Q)$.
Then for every $R \in P_\infty (H)$ we have
$$
\phi (R^\sharp) = (\phi (R))^\sharp,
$$
that is, $\phi$ preserves the relation $\sharp$ in both directions.
\end{lemma}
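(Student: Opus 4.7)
The plan is to propagate the single-pair hypothesis $\phi(P)\sharp\phi(Q)$ to every $\sharp$-neighbour of $P$ by a direct application of Lemma \ref{lukna}, then to propagate it further along $\sharp$-paths in $P_\infty(H)$ using Corollary \ref{zamuda}, and finally to upgrade the inclusion to an equality by invoking the same argument for $\phi^{-1}$.

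First I would observe that $\phi^{-1}\colon P_\infty(H)\to P_\infty(H)$ is a bijection that also preserves $\sim$ in both directions, and that it satisfies the hypothesis at the pair $(\phi(P),\phi(Q))$: indeed $\phi(P)\sharp\phi(Q)$, while $\phi^{-1}(\phi(P))=P$ and $\phi^{-1}(\phi(Q))=Q$ satisfy $P\sharp Q$. Hence it suffices to prove the forward inclusion $\phi(R^\sharp)\subseteq(\phi(R))^\sharp$ for every $R\in P_\infty(H)$; the reverse inclusion then follows by applying the same statement to $\phi^{-1}$ at $\phi(R)$.

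To obtain the forward inclusion, consider
$$
\mathcal{G}=\{\,R\in P_\infty(H)\,:\,\phi(R)\sharp\phi(S)\text{ for every }S\in R^\sharp\,\}.
$$
The first step is to show $P\in\mathcal{G}$. Given any $S\in P^\sharp$, the triple $(P,Q,S)$ satisfies $P\sharp Q$, $P\sharp S$, and $\phi(P)\sharp\phi(Q)$ (the last by hypothesis), so Lemma \ref{lukna} yields $\phi(P)\sharp\phi(S)$, which is exactly $P\in\mathcal{G}$. The second step is the key propagation: if $R\in\mathcal{G}$ and $R'\in R^\sharp$, then $R'\in\mathcal{G}$ as well. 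Indeed, from $R\in\mathcal{G}$ applied to $S=R'\in R^\sharp$ we get $\phi(R)\sharp\phi(R')$, and by symmetry of $\sharp$ also $\phi(R')\sharp\phi(R)$. Now for any $S'\in R'^\sharp$ the triple $(R',R,S')$ satisfies $R'\sharp R$, $R'\sharp S'$, and $\phi(R')\sharp\phi(R)$, so Lemma \ref{lukna} gives $\phi(R')\sharp\phi(S')$, as required.

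Combining these two observations with Corollary \ref{zamuda}, for any $R\in P_\infty(H)$ there is a sequence $P=P_0,P_1,\ldots,P_j=R$ in $P_\infty(H)$ with $P_{k-1}\sharp P_k$ and $j\le 3$; walking along this sequence, the propagation step shows inductively $P_k\in\mathcal{G}$, so $R\in\mathcal{G}$, which is precisely $\phi(R^\sharp)\subseteq(\phi(R))^\sharp$. Applying the same reasoning to $\phi^{-1}$ at the projection $\phi(R)$ yields $(\phi(R))^\sharp\subseteq\phi(R^\sharp)$, and the two inclusions together complete the proof. The only non-routine point is recognising that the connectivity statement of Corollary \ref{zamuda} is exactly what bridges Lemma \ref{lukna} from ``local at $P$'' to a global statement; everything else is a symmetric bookkeeping.
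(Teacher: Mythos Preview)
Your proof is correct and follows essentially the same route as the paper's: use Lemma \ref{lukna} to turn the single $\sharp$-preserved pair into preservation on all of $P^\sharp$, propagate this along $\sharp$-paths supplied by Corollary \ref{zamuda}, and obtain the reverse inclusion by applying the same argument to $\phi^{-1}$. The only difference is organisational: the paper first upgrades the inclusion to an equality at each ``good'' $R$ and then propagates, whereas you propagate the forward inclusion globally and apply $\phi^{-1}$ once at the end; both arrangements are equivalent.
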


\begin{proof}
If for some $R\in P_\infty (H)$ there exists $S\in R^\sharp$ such that $\phi (R) \sharp \phi (S)$, then Lemma \ref{lukna} 
yields that $\phi (R^\sharp) \subset (\phi (R))^\sharp$. But as $\phi^{-1}$ has the same properties as $\phi$, we actually have
$\phi (R^\sharp) = (\phi (R))^\sharp$.

It follows that if for some $R\in P_\infty (H)$ we have $\phi (R^\sharp) = (\phi (R))^\sharp$, then 
 $\phi (S^\sharp) = (\phi (S))^\sharp$ for every $S\in R^\sharp$.

The lemma is now a straightforward consequence of Corollary \ref{zamuda}.
\end{proof}

\begin{corollary}\label{silv}
Let $\phi \colon P_\infty (H) \to P_\infty (H)$ be a bijective map such that for every pair $P,Q \in P_\infty (H)$ we have
$$
P \sim Q \iff \phi (P) \sim \phi (Q).
$$
 Then there exists a unitary or an antiunitary operator (orthogonal operator) $U$ on $H$ such that either 
$$
\phi (P) = UPU^\ast
$$
for every $P \in P_\infty (H)$; or
$$
\phi (P) = U (I-P) U^\ast
$$
for every $P \in P_\infty (H)$.
\end{corollary}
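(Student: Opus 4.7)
The proof naturally splits into two steps. First, I would reduce to the case that $\phi$ preserves the relation $\sharp$ in both directions. To this end, fix a triple $P_1, P_2, P_3 \in P_\infty (H)$ that are pairwise $\sharp$-related, for instance by choosing an orthogonal decomposition $H = H_1 \oplus H_2 \oplus H_3 \oplus H_4$ into four infinite-dimensional summands and setting $P_i$ to be the projection onto $H_i$ for $i=1,2,3$. Applying Lemma \ref{organi} forces one of the following alternatives: either $\phi(P_1),\phi(P_2),\phi(P_3)$ are pairwise orthogonal, or $I-\phi(P_1),I-\phi(P_2),I-\phi(P_3)$ are pairwise orthogonal. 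In the former case the range of $\phi(P_3)$ lies in the kernel of $\phi(P_1)+\phi(P_2)$, so this kernel is infinite-dimensional; consequently $\phi(P_1)\sharp\phi(P_2)$, and Lemma \ref{fupavre} upgrades this single instance into preservation of $\sharp$ in both directions. In the latter case, the same argument applied to $\widetilde\phi(P):=I-\phi(P)$, which manifestly still preserves $\sim$ since the defining conditions of $\sim$ are exchanged by the involution $P\mapsto I-P$, shows that $\widetilde\phi$ preserves $\sharp$ in both directions.

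After replacing $\phi$ by $\widetilde\phi$ if needed (this replacement is what ultimately produces the alternative $\phi(P)=U(I-P)U^\ast$ in the conclusion), I may therefore assume that $\phi$ itself is a bijection of $P_\infty(H)$ preserving $\sharp$ in both directions. It then remains to prove that every such bijection is implemented by a unitary or antiunitary operator $U$, i.e.\ $\phi(P)=UPU^\ast$. This is a Wigner--Uhlhorn-type statement on the Grassmannian of infinite-rank infinite-corank projections: the relation $\sharp$ encodes enough of the orthogonality structure to recover, via chain arguments of the type used in Lemma \ref{minneap} and Corollary \ref{zamuda}, the full orthogonality relation on $P_\infty(H)$, and from there the lattice of closed infinite-dimensional subspaces. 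The classical fundamental-theorem-of-projective-geometry machinery (in its Hilbert-space formulation) then yields a bounded semilinear bijection of $H$ inducing $\phi$, and the isometry of $\phi$ forces semilinearity to specialise to $\mathbb R$- or $\mathbb C$-linearity; combining this with the fact that $\phi$ maps projections to projections gives unitarity or antiunitarity of $U$.

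The main obstacle is the second step. The relation $\sharp$ is stronger than $\perp$ because it carries the extra infinite-corank constraint on $P+Q$, so one cannot directly cite an orthogonality-preserver theorem: a careful chain-connectivity argument in the spirit of Corollary \ref{zamuda} is needed to propagate $\sharp$-preservation to ordinary $\perp$-preservation on $P_\infty(H)$, and only then can one invoke the existing structural results (such as those of Moln\'ar type cited in the introduction) to arrive at the (anti)unitary form. Everything else — the initial case-split via Lemma \ref{organi}, the passage to $\widetilde\phi$, and the assembly of the two alternatives in the conclusion — is a routine bookkeeping consequence of the preliminary lemmas already established.
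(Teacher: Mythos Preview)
Your first step is correct and essentially identical to the paper's: pick a triple of pairwise $\sharp$-related projections, apply Lemma~\ref{organi}, replace $\phi$ by $I-\phi$ if necessary, and then invoke Lemma~\ref{fupavre} to conclude that $\phi$ preserves $\sharp$ in both directions.

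The genuine gap is in your second step. You correctly identify that the remaining work is to pass from $\sharp$-preservation to $\perp$-preservation, but you do not actually carry this out; you only gesture at ``chain arguments of the type used in Lemma~\ref{minneap} and Corollary~\ref{zamuda}''. Those results establish $\sharp$-connectivity of $P_\infty(H)$, which is useful for propagating a local property globally (as in Lemma~\ref{fupavre}), but they do not by themselves show that a $\sharp$-preserver sends an arbitrary orthogonal pair to an orthogonal pair. The difficult case is precisely when $P\perp Q$ but $P+Q$ has finite-dimensional kernel, and no chain argument in the spirit of Corollary~\ref{zamuda} handles this directly. There is also an error in your final paragraph: you invoke ``the isometry of $\phi$'' to force the semilinear map to be linear or conjugate-linear, but Corollary~\ref{silv} does \emph{not} assume $\phi$ is an isometry --- only that it preserves $\sim$.

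The paper's route through this step is quite different and more concrete. First one observes that the order relation is encoded purely by $\sharp$: for $S,T\in P_\infty(H)$ one has $S\le T$ if and only if every $M\in P_\infty(H)$ with $T\sharp M$ also satisfies $S\sharp M$. Hence $\phi$ preserves $\le$ in both directions. Next, for each $T\in P_\infty(H)$ the complement $I-T$ is characterised as the unique $L\in P_\infty(H)$ such that (i) $M\sharp T$ implies $M\le L$, and (ii) whenever $M,N\le T$ with $M\sharp N$ one has $M\sharp L$ and $N\sharp L$. Since this characterisation uses only $\sharp$ and $\le$, it forces $\phi(I-T)=I-\phi(T)$. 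Finally, $T\perp S$ is equivalent to $S\le I-T$, so $\phi$ preserves orthogonality, and one concludes by citing the structural theorem for orthogonality-preserving bijections of $P_\infty(H)$ (Theorem~1.2 of \cite{Sem}, or the result of \cite{Gy}), which already delivers a unitary or antiunitary $U$ without any appeal to an isometry hypothesis.
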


\begin{proof}
We take $P,Q \in P_\infty (H)$ with $P \sharp Q$. By Lemma \ref{organi} we have either $\phi (P) \sharp \phi (Q)$,
or $(I- \phi (P)) \sharp (I - \phi (Q))$. In the second case we replace the map $\phi$ by the map $P \mapsto I - \phi (P)$.
Thus, there is no loss of generality in assuming that we have the first possibility. But then we already know that
for every $S,T \in P_\infty (H)$ we have
\begin{equation}\label{jjhhl}
T \sharp S \iff \phi (T) \sharp \phi (S).
\end{equation}

We will prove  that
for every $S,T \in P_\infty (H)$ we have
$$
T \perp S \iff \phi (T) \perp \phi (S).
$$
Once we will verify this the conclusion follows from
\cite[Theorem 1.2]{Sem} (or from a result of \cite{Gy}).

Because the inverse of $\phi$ has the same properties as $\phi$, it is enough to check that for every $S,T \in P_\infty (H)$ we have
$$
T \perp S \Rightarrow \phi (T) \perp \phi (S).
$$

Let $T,S$ be any elements of $P_\infty(H)$. Clearly,
$S \le T$ if and only if for every $M \in P_\infty (H)$ we have
$$
T \sharp M \Rightarrow S \sharp M,
$$
and hence (\ref{jjhhl}) yields that
$$
S \le T \iff \phi (S) \le \phi (T).
$$

Let $T \in P_\infty (H)$. Then $I-T$ can be characterized as the unique element $L$ of $P_\infty (H)$ with the properties:
\begin{itemize}
\item for every $M \in P_\infty (H)$ we have: $M \sharp T \Rightarrow M \le L$, and
\item for every pair $M,N \in P_\infty (H)$ we have: $M,N \le T$ and $M \sharp N$ imply that $M \sharp L$ and $N \sharp L$.
\end{itemize}

It follows that $\phi (I-T) = I - \phi (T)$. Since $T \perp S$ holds if and only $S \le I -T$ we conclude that $\phi$ preserves orthogonality.
\end{proof}

\section{Proofs of the main results}

We start with the proof of our main result.

\begin{proof}[Proof of Theorem \ref{glavni}]
Assume that $P,Q \in P_\infty (H)$ and $P\sim Q$. We already know from the proof of Lemma \ref{and} that then there exists an $R\in P_\infty (H)$ with
\begin{equation}\label{holds}
\| R - P \| = \| R - Q \| = {1 \over \sqrt{2}}.
\end{equation}
Note that for such an $R$ we also have
\begin{equation}\label{kepek}
\| \phi(R) - \phi(P) \| = \| \phi(R) - \phi(Q) \| = {1 \over \sqrt{2}}.
\end{equation}

By Halmos' two projections theorem, up to a unitary similarity $H$ can be written as an orthogonal direct sum $H= H_1 \oplus H_2 \oplus H_3 \oplus H_4 \oplus K \oplus K$ such that the corresponding matrix representations
of $\phi(P)$ and $\phi(Q)$ are
$$
\phi( P) = \left[ \begin{matrix} { I & 0 & 0 & 0 & 0 & 0 \cr  0 & 0 & 0 & 0 & 0 & 0 \cr  0 & 0 & I & 0 & 0 & 0 \cr  0 & 0 & 0 & 0 & 0 & 0 \cr  0 & 0 & 0 & 0 & I & 0 \cr  0 & 0 & 0 & 0 & 0 & 0 \cr} \end{matrix} \right]
\ \ \ {\rm and} \ \ \ 
\phi (Q) = \left[ \begin{matrix} { 0 & 0 & 0 & 0 & 0 & 0 \cr  0 & I & 0 & 0 & 0 & 0 \cr  0 & 0 & I & 0 & 0 & 0 \cr  0 & 0 & 0 & 0 & 0 & 0 \cr  0 & 0 & 0 & 0 & C^2 & SC \cr  0 & 0 & 0 & 0 & SC & S^2 \cr} \end{matrix} \right],
$$ 
where $S, C \colon K \to K$ are self-adjoint injective operators satisfying $0 \le S,C \le I$ and $S^2 + C^2 = I$.

By (\ref{kepek}) and Lemma \ref{etko} we have necessarily $\dim H_1 = \dim H_2$. Furthermore, from $\| \phi (P) - \phi (Q) \| = 1$ and Proposition \ref{ninana-es-jiba} we immediately conclude that either $\dim H_1 = \dim H_2\not= 0$,
or $K\not= \{ 0 \}$ and $\| S\| = 1$.

Next, we prove that the second possibility cannot happen.
Let $\widetilde R\in P_\infty (H)$ be arbitrary such that it satisfies
$$
\| \widetilde R - \phi(P) \| = \| \widetilde R - \phi(Q) \| = {1 \over \sqrt{2}}.
$$
Note that by (\ref{kepek}) such an $\widetilde{R}$ exists.
We set $R:= \phi^{-1}(\widetilde{R})$ which clearly satisfies (\ref{holds}).
By Lemma \ref{and} there exists exactly one mapping $\alpha \colon [ 0, {\pi \over 2} ] \to P_\infty (H)$ such that
$\alpha (0) = P$, $\alpha \left({\pi \over 2} \right) = Q$,  $\alpha \left({\pi \over 4} \right) = R$,
and $\| \alpha (\theta_1 ) - \alpha (\theta_2) \| = \sin | \theta_1 - \theta_2 |$
for all $\theta_1 , \theta_2 \in  [ 0, {\pi \over 2} ]$. 
Since $\phi$ is a bijective isometry, it is straightforward that there exists exactly one mapping $\gamma \colon [ 0, {\pi \over 2} ] \to P_\infty (H)$ such that
$\gamma (0) = \phi(P)$, $\gamma \left({\pi \over 2} \right) = \phi(Q)$,  $\gamma \left({\pi \over 4} \right) = \widetilde{R}$,
and $\| \gamma (\theta_1 ) - \gamma (\theta_2) \| = \sin | \theta_1 - \theta_2 |$
for all $\theta_1 , \theta_2 \in  [ 0, {\pi \over 2} ]$.
Using Lemma \ref{pasetr} we see that we cannot have $K\not= \{ 0 \}$ and $\| S\| = 1$.

Hence, we have $\dim H_1 = \dim H_2\not= 0$, furthermore if $K\not= \{ 0 \}$, then $\|S\| < 1$.
In a similar way as above, we get from Lemma \ref{pasedr} that $K$ actually must be the zero subspace.
Now, using the same argument once more, we conclude from Lemma \ref{paseen} that $\dim H_3 = 0$ or $\dim H_4 = 0$.
Thus up to a unitary similarity we have either
$$
\phi(P) = \left[ \begin{matrix} {I & 0 & 0 \cr 0 & 0 & 0 \cr 0 & 0 & 0 \cr} \end{matrix} \right] \ \ \ {\rm and} \ \ \
\phi(Q) = \left[ \begin{matrix} {0 & 0 & 0 \cr 0 & I & 0 \cr 0 & 0 & 0 \cr} \end{matrix} \right]
$$  
(note that the last row and the last column may be absent), or
$$
\phi (P) = \left[ \begin{matrix} {I & 0 & 0 \cr 0 & 0 & 0 \cr 0 & 0 & I \cr} \end{matrix} \right] \ \ \ {\rm and} \ \ \
\phi (Q) = \left[ \begin{matrix} {0 & 0 & 0 \cr 0 & I & 0 \cr 0 & 0 & I \cr} \end{matrix} \right] .
$$
In other words, $P\sim Q$ implies $\phi (P)\sim\phi (Q)$.
Since $\phi^{-1}$ has the same properties as $\phi$, we obtain that
$$
P \sim Q \iff \phi (P) \sim \phi (Q)
$$
for every pair $P,Q \in P_\infty (H)$.
We complete the proof using Corollary \ref{silv}.
\end{proof}

\begin{proof}[Proof of Theorem \ref{vse}]
We need to prove that 
\begin{itemize}
\item either $0$ is mapped to $0$ and $I$ is mapped to $I$, or  $0$ is mapped to $I$ and $I$ is mapped to $0$,
\item for every positive integer $n$ either $P_n (H)$ is mapped onto $P_n (H)$ and $P^n (H)$ is mapped onto $P^n (H)$, or
$P_n (H)$ is mapped onto $P^n (H)$ and $P^n (H)$ is mapped onto $P_n (H)$,
\item and $\phi (P_\infty (H)) = P_\infty (H)$.
\end{itemize}
Once we will verify that the above is true the proof follows directly from structural results for bijective isometries of $P_n (H)$, $n=1,2, \ldots$, 
Theorem \ref{glavni}, and the obvious fact that $\varphi \colon P_n (H) \to P_n (H)$ is a bijective isometry if and only if the maps $P \mapsto I-\varphi (P)$, $P\in P_n (H)$,
and $P \mapsto I - \varphi (I -P)$, $P \in P^n (H)$, are bijective isometries of $P_n (H)$ onto $P^n (H)$, and $P^n (H)$ onto itself, respectively.

If $P,Q \in P(H)$ belong to two different subsets 
\begin{equation}\label{ekskurzn}
\{ 0 \}, \{ I \}, P_1 (H), P^1 (H), 
 P_2 (H), P^2 (H), \ldots, P_\infty (H), 
\end{equation}
then clearly $\| P - Q \| =1$. On the other hand, 
if $P$ and $Q$ belong to the same subset, then one can easily find a chain of projections $P = P_0, P_1, \ldots, P_k = Q$ with $k$ a positive integer such that
$\| P_{j-1} - P_j \| < 1$, $j=1, \ldots , k$. Of course, this is trivial when $\| P -Q \| < 1$ and it follows easily from the two projections theorem when
$\| P  - Q \| = 1$.

Hence, each of the subsets from the list (\ref{ekskurzn}) is mapped bijectively onto some subset in the list. In this list we have two singletons and then of course,
each of them is mapped onto itself, or each of them is mapped onto the other one. It was proved in \cite{GeS} that if $\| P - Q \| = 1$ for some positive integer $n$ and some $P,Q \in P_n (H)$,
then $P^{\le { 1 \over \sqrt{2}}}\cap Q^{\le { 1 \over \sqrt{2}}}$ is either a compact manifold homeomorphic to the unitary group ${\cal U}_n$ of all $n \times n$ unitary matrices, or
a non-empty non-compact subset of $P_n (H)$. As $P_n (H)$ is isometric to $P^n (H)$ we immediately conclude that for every pair $P, Q \in P^n (H)$ with $\| P - Q \| = 1$ the set
 $P^{\le { 1 \over \sqrt{2}}}\cap Q^{\le { 1 \over \sqrt{2}}}$ is either a compact manifold with the same dimension as ${\cal U}_n$, or
a non-empty non-compact subset of $P_n (H)$. The dimensions of the compact manifolds ${\cal U}_n$, $n=1,2, \ldots$, are well-known. For us the exact values are not important, we only
need the obvious fact that the dimension of ${\cal U}_n$ is a strictly increasing function of $n$. Finally, Lemma \ref{etko} tells that for every $P \in P_\infty (H)$ we can find  $Q \in P_\infty (H)$
such that $\| P - Q\| = 1$ and 
 $P^{\le { 1 \over \sqrt{2}}}\cap Q^{\le { 1 \over \sqrt{2}}}$ is the empty set.

It follows that for every positive integer $n$ either $P_n (H)$ is mapped onto $P_n (H)$ and $P^n (H)$ is mapped onto $P^n (H)$, or
$P_n (H)$ is mapped onto $P^n (H)$ and $P^n (H)$ is mapped onto $P_n (H)$,
and $\phi (P_\infty (H)) = P_\infty (H)$, as desired.
\end{proof}

\bigskip

\noindent{\bf Acknowledgement.} The authors are very grateful to the London Mathematical Society for the Research in Pairs (Scheme 4) grant (reference number: 41642) which enabled the second author to visit the first author at University of Reading. During that visit the authors were able to complete this research project on which they have been working over the last few years.

\end{document}